\documentclass[runningheads]{llncs}
\usepackage[T1]{fontenc}
\usepackage{graphicx}
\usepackage{amssymb}
\usepackage{mathrsfs}
\usepackage{stmaryrd}
\usepackage{xcolor}
\usepackage{amsmath,stackengine}
\usepackage{array}
\usepackage{hyperref}
\newtheorem{notation}[remark]{Notation}

\input{tikzit.tikzdefs}
\usepackage{tikzit}

\tikzstyle{empty}=[shape=circle, tikzit fill={rgb,255: red,191; green,191; blue,191}]
\tikzstyle{dot}=[fill=black, draw=black, shape=circle]

\tikzstyle{to}=[draw=black, ->]
\tikzstyle{equal-arrow}=[{-}, double equal sign distance]
\tikzstyle{hook}=[right hook->, draw=black, tikzit draw=magenta]
\tikzstyle{dashed-arrow}=[dashed, ->]
\tikzstyle{mono}=[draw=black, to reversed->]

\begin{document}
\title{On the Centre of Strong Graded Monads}
\author{Flavien Breuvart\inst{1} \and
Quan Long\inst{2}\and
Vladimir Zamdzhiev\inst{3}}
\authorrunning{F. Breuvart et al.}
%
\institute{LIPN, CNRS, Université Paris Nord
 \and
  École Normale Supérieure Paris-Saclay
 \and
  Université Paris-Saclay, CNRS, ENS Paris-Saclay, Inria, Laboratoire Méthodes Formelles, 91190, Gif-sur-Yvette, France
}
\maketitle              
\begin{abstract}
  We introduce the notion of ``centre'' for pomonoid-graded strong
  monads which generalizes some previous work that describes the centre of
  (not graded) strong monads. We show that, whenever the centre exists, this
  determines a pomonoid-graded commutative submonad of the original one. We
  also discuss how this relates to duoidally-graded strong monads.
\keywords{Category Theory \and Graded Monad \and Centre.}
\end{abstract}
%
%
%
\section{Introduction and Related Work}
\label{sec:intro}

The notions of \emph{centre/centrality} and similarly
\emph{commutant/centraliser} can be formulated for many different kinds of
algebraic structures, e.g. monoids, groups, semirings. It also makes sense in
certain kinds of categorical settings. For instance, premonoidal categories
admit a centre \cite{premonoidal}, which is essential for the development of
the theory. Another example is given by enriched algebraic theories and
associated monads which were shown to admit centralisers in \cite{commutants}.
In other related work \cite{commutativity}, the authors study commutativity in
a duoidal setting which is related to the above mentioned notions of centre and
centralisers. More recently, in \cite{central-submonads}, the authors
established some additional results for the centre of a strong monad on a
symmetric monoidal category and also studied the more general concept of
central submonad. Inspired by these developments, in this paper we consider yet
another notion of centre, this time for strong monads that are graded by a
(partially ordered) monoid, which can be seen as an immediate generalization of
the definition of centre proposed in \cite{central-submonads}.

Constructing the centre of a monad is a way to recover the commutativity of it
by removing the effects that violate it. In practice, commutativity of a monad
is an important property which means that the effect can occur
deterministically inside operations without forcing the evaluation order (most
compilers are allowed to evaluate operands in the order of their choice for
optimization purposes). A Graded monad, however, can be seen as a form of
statistical analysis that gives insight on the effects that can occur. We think
that studying the interaction between these structures is important.

Before we present the technical results, we provide some computational
intuition for the centre of a strong monad that may be useful to some readers.
Strong monads on a symmetric monoidal category can be used to represent
sequencing of computational effects.  Informally, suppose we are given an
effectful computation $f \colon X_1 \to X_2$ acting on some variable $x_1 \colon X_1$ and another
effectful computation $g \colon Y_1 \to Y_2$ acting on a different variable $y_1 \colon Y_1$. Since
these two computations are effectful, the two ways of
sequencing the computations
\[ \text{do } x_2 \leftarrow f(x_1); y_2 \leftarrow g(y_1); h(x_2, y_2) \]
and
\[ \text{do } y_2 \leftarrow g(y_1); x_2 \leftarrow f(x_1); h(x_2, y_2) \]
do \emph{not} necessarily have the same computational result, even though the
two computations are acting on different variables that have different types.
When we interpret the above computational situation in the Kleisli category
$\CC_T$ of a strong monad $T \colon \CC \to \CC$, this potential difference is
reflected by the fact that $\CC_T$ has a \emph{premonoidal} structure
\cite{premonoidal}, rather than a monoidal one, and the premonoidal product is
not bifunctorial (in general), unlike the monoidal one. When the effect under
consideration is commutative and so the monad $T$ is also \emph{commutative}
(not just strong), then the two ways of sequencing above have the same
computational result and this is reflected in the Kleisli category $\CC_T$
which has a monoidal structure (not just premonoidal) in this case. So, we may
naturally arrive at the notion of \emph{centre} of a strong monad by
identifying all the central elements/effects, i.e. those that commute with
all other elements/effects. This is the approach taken in
\cite{central-submonads} and in this paper we consider a more general setting.

More specifically, we consider a wider range of effects, namely those that can
be described by pomonoid-graded strong monads, and then we propose a definition
for the centre of such monads. The construction is analogous, but more general,
compared to the one in \cite{central-submonads}.
In the last section, we investigate possible research perspectives which exploit
our understanding of the preliminary study performed here. In particular, we
acknowledge the fact that the centre of graded monads is seldom usable in practice as
it is even more constrained than that of a monad. Finally, we open the discussion to
relaxations in which we may eventually obtain more refined notions of centres.

\section{Graded Monads}
\label{sec:graded}

There are different ways of introducing graded monads in the literature~\cite{FujiiKM16,canonical,OWE20}.
They all share the same structure: using a monoid to index functors which, together, behave like a monad whose multiplication is graded by that of the monoid. 
It is then possible to extend the monoid with additional structures such as that of a semiring to represent additive monads, for example. 
We present the most common of these extensions, that of gradations by pomonoids (partially ordered monoids), 
whose order represents a degree of knowledge and which is essential for most applications that use statistical analysis \cite{IMO20}.

\subsection{Graded Monads}
\label{sub:def-graded}

Before we present the full definition of a pomonoid-graded monad, we start with
a monoid-graded monad so that readers can hopefully acquire a better intuition
for the more general notion that follows afterwards.

\begin{definition}[Monoid Graded Monads]\label{def:graded-monad}
	Let $ \GG = (G, i, \ast)$ be a monoid. 
	A $\GG$-graded monad on a category $\CC$ is given by the following data:
	\begin{itemize}
		\item for any $\textcolor{red}{a} \in G $, an endofunctor $\overset{\textcolor{red}{a} }{\TT} : \CC \rightarrow \CC$;
		\item a natural transformation $\eta : \Id \rightarrow \overset{ i }{\TT}$;
		\item for any $\textcolor{red}{a}, \textcolor{teal}{b} \in G $, a natural transformation	
		$\mu^{\textcolor{red}{a}, \textcolor{teal}{b}}: \overset{\textcolor{red}{a} }{\TT} 
		\cdot \overset{\textcolor{teal}{b} }{\TT} \rightarrow \overset{\textcolor{red}{a} \ast \textcolor{teal}{b} }{\TT}$,
      where ($- \cdot -$)  indicates functor composition (we sometimes omit the gradations on $\mu$ for brevity);
	\end{itemize}
	such that the following diagrams commute:
	\[\stikz[0.83]{../figures/monad-def-graded.tikz}\]
\end{definition}

\begin{remark}
  Every monad $T \colon \CC \to \CC$ can be seen as a $\GG$-graded monad by
  setting $\GG$ to be a singleton (i.e. trivial) monoid.
\end{remark}

Next, we introduce pomonoids and the appropriate notion of morphism between
them. This would then allow us to introduce a more flexible notion of gradation
for our monads.

\begin{definition}[Pomonoid \cite{perf_pomo}]
  A \emph{partially ordered monoid (pomonoid)} is a tuple $((G, \leq), i, \ast ),$ where $(G, i, \ast )$ is a monoid and where $\leq$ is a partial order on $G$, such that
  the monoid operation is monotone with respect to the order in the following sense:
	for all $ w,x,y,z \in G $, if $w \leq  x$  and $y \leq  z$, then $w \ast y \leq x \ast z$.
\end{definition}

\begin{definition}[Morphism between Pomonids]\label{def:morph-pomonoid}
	Let $\GG = ((G, \leq), i, \ast)$ and $\mathcal H = ((H, \sqsubseteq), e, \circledast)$ be two pomonoids. 
	A morphism between pomonoids is a function $\phi : G \to H,$ such that:
	\begin{itemize}
		\item  $e \sqsubseteq \phi (i)$
		\item  $\phi(x) \circledast \phi(y) \sqsubseteq \phi (x \ast y)$ for all $x,y \in G.$
	\end{itemize}
\end{definition}

\begin{remark}
  A pomonoid $((G, \leq), i, \ast )$ can also be seen as a monoidal category $\CC$ that is 
  both skeletal and thin in the following way:
  \begin{itemize}
    \item objects of $\CC$ are given by the elements $a \in G$;
    \item $\CC$ has a unique morphism $a \to b$  
      iff $a \leq b$ in $G$; 
    \item the tensor product is given by $a \otimes b \eqdef a \ast b$;
    \item the tensor unit is given by $i$.
  \end{itemize}
  This correspondence may also be extended to cover morphisms between
  pomonoids: if $\phi \colon \GG \to \HH$ is a morphism of pomonoids, then
  $\phi$ may be identified with a lax monoidal functor between the (thin and
  skeletal) monoidal categories that represent $\GG$ and $\HH$.
\end{remark}

\begin{definition}[Pomonoid Graded Monad]\label{def:graded-monad-pomonoid}
  A \emph{pomonoid-graded monad} on a category $\CC$ is given by the following data:
  \begin{itemize}
    \item a pomonoid $\GG = ((G, \leq), i, \ast),$ whose elements $\textcolor{red}{a} \in G$ we call \emph{gradations},
    \item for any $\textcolor{red}{a} \in G $, an endofunctor $\overset{\textcolor{red}{a} }{\TT} : \CC \rightarrow \CC$;
    \item a natural transformation $\eta : \Id \rightarrow \overset{ i }{\TT}$;
    \item for any $\textcolor{red}{a}, \textcolor{teal}{b} \in G $, a natural transformation	
      $\mu^{\textcolor{red}{a}, \textcolor{teal}{b}}: \overset{\textcolor{red}{a} }{\TT} 
      \cdot \overset{\textcolor{teal}{b} }{\TT} \rightarrow \overset{\textcolor{red}{a} \ast \textcolor{teal}{b} }{\TT}$
    \item such that the following diagrams commute:
    \[\stikz[0.83]{../figures/monad-def-graded.tikz}\]
    \item for any $\textcolor{red}{a} \leq  \textcolor{red}{a'}$ in $\GG$, 
      a natural transformation $\overset {\textcolor{red}{a} \leq \textcolor{red}{a'}}{\TT} : \overset{\textcolor{red}{a} }{\TT}\to \overset{\textcolor{red}{a'} }{\TT}$, such that:
    \[	
      \overset{\textcolor{red}{a} \leq \textcolor{red}{a}} {\TT} = \iota \ (\text{identity natural transformation}) \; \; \; , 
      \overset{\textcolor{red}{a'} \leq \textcolor{red}{a''}} {\TT} \circ \overset{\textcolor{red}{a} \leq \textcolor{red}{a'}} {\TT} 
      =
      \overset{\textcolor{red}{a} \leq \textcolor{red}{a''}} {\TT} \text{ and }
      \]
      \[\stikz[0.83]{../figures/monad-def-graded-pomonoid.tikz}\]
	\end{itemize}
\end{definition}

\begin{remark}
  $\TT$ can be seen as a bifunctor $\GG\times \CC \rightarrow \CC$, with the last conditions being the functoriality and the naturality of $\mu$ with respect to $\GG$.
  In fact, we can go even further: graded monads over $\GG$ on a category $\CC$ are exactly lax-monoidal functors $(\TT,\mu,\eta)$ from $(\GG,*,i)$, seen as a thin monoidal category, to $([\CC,\CC],\cdot,\id)$. 
\end{remark}

\begin{example}
  Let $T$ be a strong monad and $Z$ its centre, then we can always construct a monad graded by $((\mathtt{Bool},\mathtt{t\!\!t}\le\mathtt{f\!\!f}),\mathtt{t\!\!t},\wedge)$ defined by $\overset{\mathtt{t\!\!t}}T=Z$ and $\overset{\mathtt{f\!\!f}}T=T$ that keeps track whether we are in the centre or not. Notice that the morphism $\overset{\mathtt{t\!\!t}\le\mathtt{f\!\!f}}T:Z\rightarrow T$ is the submonad monomorphism (inclusion).
  Such gradations can be used to infer whether an operation $\odot:A\times B\rightarrow C$ can be lifted into an effectful environment $\hat \odot:TA\times TB\rightarrow TC$ and still be evaluated freely by the compiler. Indeed, the graded version is of the form $\hat \odot:\overset{\mathtt{a}}TA\times \overset{\mathtt{b}}TB\rightarrow \overset{\mathtt{a\wedge b}}TC$, the evaluation order have to be forced if $a=b=\mathtt{f\!\!f}$ but is commutative otherwise.
\end{example}

The notion of centre (and centrality) can be formulated for \emph{strong}
monads, as was previously argued in \cite{central-submonads}. The corresponding
definition for pomonoid-graded monads is presented next.

\begin{definition}[Strong Pomonoid Graded Monad]\label{def:strong-monad-graded}
	Let $ \GG = ((G, \leq), i, \ast)$ be a pomonoid.
  A \emph{strong} $\GG$-graded monad over a monoidal category $(\CC,\otimes,I,\alpha,\lambda, \rho)$ 
	is a $\GG$-graded monad $(\TT ,\eta,\mu)$ equipped with a family of natural transformations
	$\overset{\textcolor{red}{a}}{\tau}_{X,Y}:X\otimes \overset{\textcolor{red}{a}}{\TT} Y\to \overset{\textcolor{red}{a}}{\TT} (X\otimes Y) , $
	indexed by elements $\textcolor{red}{a} \in \GG$, called \emph{graded strength},
	such that for any element $\textcolor{teal}{b} \in G$ and objects $X, Y$ in $\CC$, the following diagrams commute:
	\[\stikz[0.83]{../figures/strength-def-graded.tikz}\]
	where (for simplicity) we omitted the superscripts of $\tau$.
\end{definition}

If, moreover, we are given a \emph{symmetric} monoidal category $(\CC, \otimes, I, \gamma)$, then
we can define the $\GG$-\emph{graded costrength}
$\overset{\textcolor{red}{a}}{\tau'}_{X,Y} \colon \overset{\textcolor{red}{a}}{\TT} X \otimes Y \to \overset{\textcolor{red}{a}}{\TT}(X \otimes Y)$ 
by $\overset{\textcolor{red}{a}}{\tau'}_{X,Y} \eqdef \overset{\textcolor{red}{a}}{\TT}(\gamma_{Y,X})\circ \overset{\textcolor{red}{a}}{\tau}_{Y,X}\circ\gamma_{\overset{\textcolor{red}{a}}{\TT} X,Y}$,
where $\gamma$ represents the symmetry. Note that this is completely analogous
to how the costrength is defined without for monads that are not graded. We also often omit the
gradations on $\tau'$ for convenience and note that it satisfies similar
coherence conditions to that of the strength $\tau.$

We can now introduce commutative graded monads whose definition is also
analogous to the definition of a commutative monad that is not graded.

\begin{definition}[Commutative Graded Monad]
	\label{def:commutative-monad-graded}
  Let $\GG = ((G, \leq), i, \ast)$ be a pomonoid (Not necessarily commutative).
	Let $(\TT, \eta, \mu, \tau)$ be a strong $\GG$-graded monad on 
	a \emph{symmetric} monoidal category $(\CC, \otimes, I, \gamma)$.
	Then, $\TT$ is said to be \emph{commutative} if for any $\textcolor{red}{a},\textcolor{teal}{b}\in G$,
	and any objects $X, Y \in \CC $, the following diagram commutes:
	  \[\label{eq:commutative-monad-graded}
	  \stikz[0.83]{../figures/commutative-monad-def-graded.tikz}\]
\end{definition}

\begin{remark}
  Note that in the above definition, the pomonoid is not necessarily assumed to be commutative,
  so that $\textcolor{red}{a} \ast \textcolor{teal}{b} = \textcolor{teal}{b}
  \ast \textcolor{red}{a} $ which justifies the equality in the bottom right
  corner.
\end{remark}

In the previous attempts, we tried with pomonoid who are already commutative, which we found to be too limited. 
Imagine such a scenario, that an identity monad is graded by any pomonoid, the diagram will always commute. 
But it will not be captured by the previous definition, if one believes that such case should be included, then 
the gradation should not by definition be already commutative.

Also, \cite{commu-graded-22} provided a definition of commutativity in graded monads, but from our understanding, 
this commutativity is only describing the identical case where $\textcolor{red}{a} \ast i = i \ast \textcolor{red}{a}$, 
it does not capture the general case of $\textcolor{red}{a}\ast \textcolor{teal}{b} = \textcolor{teal}{b}\ast \textcolor{red}{a}$,
hence within our knowledge, we gave the first complete definition of the commutativity of graded monads.

\subsection{Morphisms between Strong Graded Monads}
\label{sub:morphism-graded}
One of the issues that we have to address is how to formulate an appropriate
definition of the morphisms between strong graded monads. This is important for
our development as it underpins subsequent constructions that are relevant for
the construction of the centre.


In order to visualize the definition of morphism between a strong $\GG$-graded
monad $T$ and a $\HH$-graded monad $P$ over a category $\CC$, remember that
they can be seen as lax monoidal functors $\TT:\GG\rightarrow [\CC,\CC]_s$ and
$\PP:\mathcal H\rightarrow [\CC,\CC]_s$ targeting the same monoidal category,
that of endofunctors $[\CC,\CC]_s$ over $\CC$. Therefore, the morphisms between
them can be identified with the morphisms of the lax-slice category over
$[\CC,\CC]_s$ in $\mathtt{MonCat}$.

\begin{center}
\begin{tikzpicture}
  \node (G) at (0,4) {$(\GG,*,i)$};
  \node (G') at (0,0) {$(\mathcal H,\circledast,e)$};
  \node (C) at (10,2) {$([\CC,\CC]_s,\cdot,I)$};
  \node (A1) at (2.7,2.5) {};
  \node (A2) at (2.4,1) {};
  \draw[->] (G) to node [auto] {{\scriptsize $(\TT,\mu^\TT,\nu^\TT)$}} (C);
  \draw[->] (G') to node [below right] {{\scriptsize $(\PP,\mu^\PP,\nu^\PP)$}} (C);
  \draw[->] (G) to node [left] {{\scriptsize $(\phi,\mu^\phi,\nu^\phi)$}} (G');
  \draw[->,double] (A1) to node [right] {$\iota$} (A2);
\end{tikzpicture}
\end{center}

\begin{definition}[Morphism between Strong Graded Monads]
	\label{def:morph-monad-graded}
	Let $\GG = ((G,\leq),i,\ast)$ and  $\mathcal H = ((H,\sqsubseteq), e, \circledast)$ be two pomonoids.
	Let $(\TT, \eta^\TT, \mu^\TT, \tau^\TT)$ be a $\GG$-graded strong monad and let
	$(\PP,\eta^\PP, \mu^\PP, \tau^\PP)$ be an $\mathcal H$-graded strong monad  
	over a symmetric monoidal category $\CC = (C, \otimes, I, \gamma)$. 
  A \emph{morphism of strong graded monads} is given by the following data: 
  \begin{itemize}
    \item a pomonoid-morphism $\phi \colon \GG \to \mathcal H$ between the gradations;
    \item a family of natural transformations
    $\overset{\textcolor{red}{a}}{\iota} : \overset{\textcolor{red}{a}}{\TT} \Rightarrow \overset{ \phi \textcolor{red}{a} }{\PP}$ 
    indexed by elements of $\GG$\footnote[2]{We often omit the superscript of $\iota$ for convenience.},
  \item such that for all $\textcolor{red}{a}, \textcolor{teal}{b}$ in $\GG$ and $X,Y$ in $\CC$,
    the following diagrams commute: 
    \[\stikz[0.83]{../figures/map-of-monads-graded-def-1-pomonoid.tikz}\]
  \end{itemize}
\end{definition}

The subtlety here is to explain the relations between 
$\overset{e}{\PP}$, $\overset{\phi i}{\PP},$
$\overset{\phi \textcolor{red}{a} \circledast \phi \textcolor{teal}{b}}{\PP}$ and 
$\overset{\phi ( \textcolor{red}{a} \ast \textcolor{teal}{b})}{\PP}$.
This comes from the fact that the inequalities between the gradations induce natural transformations between the indexed endofunctors.

\begin{remark}
  If we equip the pomonoids $\GG$ and $\HH$ with the discrete order, then the inequalities under consideration become the usual
  equalities for a monoid homomorphism, i.e. $\phi(i) = e$ and $\phi (\textcolor{red}{a} \ast \textcolor{teal}{b}) = \phi \textcolor{red}{a} \circledast \phi \textcolor{teal}{b}$.
  Then we can obviously identify $\overset{\phi i} \PP X = \overset{e} \PP X$ and
  $\overset{\phi (\textcolor{red}{a} \ast \textcolor{teal}{b})} \PP X 
  = \overset{\phi \textcolor{red}{a} \circledast \phi \textcolor{teal}{b}} \PP X$, so that we recover a notion of morphism of strong monoid-graded monads as a special case.
\end{remark}

\section{The Centre of a Strong Graded Monad}
\label{sec:graded-centre}
We can now outline our construction for the centre of a strong graded monad.
Our construction is similar to the construction of the centre of a strong (not
graded) monad in \cite{central-submonads}, but some of our proofs are
established in a slightly different way.  For example, some of the required
proofs in \cite{central-submonads} use results from the theory of premonoidal
categories \cite{premonoidal}. However, in our setting, this is not so
straightforward, because in order to adapt the proofs, we would have to
identify a suitable ``premonoidal'' theory for strong graded monads. Instead of
doing this, we opt for a more direct approach that boils down to fairly large
diagrammatic chasing in a few cases (see Appendix \ref{sec:app}).
In particular, this gives us (as a special case) another proof for some of the
established results in \cite{central-submonads}. 

A basic intuition that we have for the centre of any algebraic structure is
that it enjoys commutativity properties. The definition of a commutative graded
monad (Definition \ref{def:commutative-monad-graded}) suggests that we should
also take the centre of the gradations into account as well. This brings us to
our next definition.

\begin{definition}[Centre of a Pomonoid]\label{def:centre-monoid}
  Given a pomonoid $\GG = ((G,\leq) , i, \ast)$, the \emph{centre} of the pomonoid, written $Z(G)$, is given by the set
	\[ 
    Z(G) \eqdef \{ \textcolor{red}{z}\in G  |\ \forall \textcolor{teal}{b} \in G, \
			\textcolor{red}{z} \ast \textcolor{teal}{b} = 
	\textcolor{teal}{b} \ast \textcolor{red}{z} \} .
	\]
\end{definition}

It is easy to see that $\ZZ(\GG) \eqdef ((Z(G),\leq), i, \ast )$ is a subpomonoid of
$\GG$ in the sense that the subset inclusion $\phi \colon Z(G) \subseteq G$ is a
pomonoid-morphism.

\begin{remark}
  The centre of a pomonoid, as given above, does not take the order into
  consideration.  It will be interesting to identify a more general and flexible
  notion of centre that depends on the order and use that in future work.
\end{remark}

Next we explain how to construct the centre of strong graded monads on $\Set$,
which we hope would help readers in understanding the more general construction
that follows afterwards.

\begin{definition}[Graded Centre in $\Set$]\label{def:centre-set-graded}
	Let $\ZZ (\GG) = ((Z(G),\leq),i,\ast)$ be a pomonoid which is the centre of the pomonoid $\GG = ((G,\leq),i,\ast)$.
	Assume further that we are given a strong $\GG$-graded monad $(\TT, \eta, \mu, \tau)$ on $\Set$.

  Given an arbitrary $\textcolor{red}{z}\in Z(G)$ and set $X$, 
	we say that the $\ZZ (\GG)$-graded \emph{centre} of $\TT$ at $(\textcolor{red}{z},X)$, 
	written $\overset{\textcolor{red}{z}}{\ZZ} X$, is the set 
	\[
        \overset{\textcolor{red}{z}}{\ZZ} X \eqdef
        \left\{ t \in \overset{\textcolor{red}{z}}{\TT} X \ \middle |\
        \begin{matrix}
	  \forall \textcolor{teal}{b} \in G,
	  \forall Y \in \Ob(\Set), \forall s \in \overset{\textcolor{teal}{b}}{\TT} Y, \\
	   \mu(\overset{\textcolor{red}{z}}{\TT}\tau'(\tau(t,s))) = 
	   \mu(\overset{\textcolor{teal}{b}}{\TT}\tau(\tau'(t,s)))
        \end{matrix}
           \right\} .
	\]
	We write $\overset{\textcolor{red}{z}}{\iota}_X : 
	\overset{\textcolor{red}{z}}{\ZZ} X \subseteq \overset{\textcolor{red}{z}}{\TT} X$ 
	for the indicated subset inclusion.
\end{definition}

The main idea for this definition is to consider all of the monadic elements
that satisfy the equation in Definition \ref{def:commutative-monad-graded}, for
suitably fixed $X$ and $\textcolor{red}{z}\in Z(G)$.

\begin{notation}
For the rest of this section, we assume that $\CC$ is a symmetric monoidal category; $V,W,X,Y$ are objects in $\CC$;
we are given a pomonoid $\GG = ((G,\leq),i,\ast)$ with centre $\ZZ (\GG) = ((Z(G),\leq),i,\ast)$;
we also have a $\GG$-graded strong monad $\TT$; 
we write $\textcolor{red}{z} \in Z(G)$ for central gradations and more generally we write 
$\textcolor{teal}{b}, \textcolor{brown}{c} \in G$ for gradations of $\GG$.
\end{notation}

To extend the definition of the centre from $\Set$ to other categories, we
introduce graded central cones.

\begin{definition}[Graded Central Cone]\label{def:central-cone-graded}
	Let $X$ be an object of $\CC$ and $\textcolor{red}{z}\in Z(G)$.
	A \emph{graded central cone} of a $\GG$-graded strong monad $\TT$ at $(\textcolor{red}{z},X)$,
	is given by a pair $(Z, \iota)$ of an object $Z$ and a morphism
	$\iota \colon Z  \to \overset{\textcolor{red}{z}}{\TT} X$,
	such that for any object $Y$ in $\CC$ and any $\textcolor{teal}{b}\in G$, the following diagram commutes:
	\[\begin{tikzpicture}
	\begin{pgfonlayer}{nodelayer}
		\node [style=empty] (0) at (-6.75, 4) {$Z \otimes\overset{\textcolor{teal}{b}}{\TT} Y$};
		\node [style=empty] (1) at (3, 4) {$\overset{ \textcolor{red}{z}}{\TT} X\otimes\overset{\textcolor{teal}{b}}{\TT} Y$};
		\node [style=empty] (2) at (12, 4) {$\overset{ \textcolor{red}{z}}{\TT} (X\otimes\overset{\textcolor{teal}{b}}{\TT} Y)$};
		\node [style=empty] (3) at (12, -0.5) {$\overset{ \textcolor{red}{z}}{\TT}  \overset{\textcolor{teal}{b}}{\TT} (X\otimes Y)$};
		\node [style=empty] (4) at (8.75, -7.5) {$\overset{\textcolor{teal}{b} \ast  \textcolor{red}{z}}{\TT}  (X\otimes Y)$};
		\node [style=empty] (5) at (-6.75, -1.5) {$\overset{ \textcolor{red}{z}}{\TT} X\otimes\overset{\textcolor{teal}{b}}{\TT} Y$};
		\node [style=empty] (6) at (-6.75, -7.5) {$\overset{\textcolor{teal}{b}}{\TT}(\overset{ \textcolor{red}{z}}{\TT}X\otimes Y)$};
		\node [style=empty] (7) at (1.5, -7.5) {$\overset{\textcolor{teal}{b}}{\TT} \overset{ \textcolor{red}{z}}{\TT} (X\otimes Y)$};
		\node [style=empty] (8) at (-6.75, 3.5) {};
		\node [style=empty] (9) at (-6.75, -1) {};
		\node [style=empty] (10) at (-6.75, -2) {};
		\node [style=empty] (11) at (-6.75, -7) {};
		\node [style=empty] (12) at (12, 3.5) {};
		\node [style=empty] (13) at (12, 0) {};
		\node [style=empty] (14) at (12, -1) {};
		\node [style=empty] (15) at (12, -4.5) {};
		\node [style=empty] (16) at (-2, 4.75) {$\iota\otimes\overset{\textcolor{teal}{b}}{\TT} Y$};
		\node [style=empty] (17) at (7, 4.5) {$\tau'$};
		\node [style=empty] (18) at (13, 1.75) {$\overset{ \textcolor{red}{z}}{\TT}\tau$};
		\node [style=empty] (19) at (13.25, -2.75) {$\mu_{X\otimes Y}^{ \textcolor{red}{z} , \textcolor{teal}{b}}$};
		\node [style=empty] (20) at (-8, 1.25) {$\iota\otimes\overset{\textcolor{teal}{b}}{\TT} Y$};
		\node [style=empty] (21) at (-7.5, -4.5) {$\tau$};
		\node [style=empty] (22) at (-2.75, -8.25) {$\overset{\textcolor{teal}{b}}{\TT}\tau'$};
		\node [style=empty] (23) at (5.25, -8.25) {$\mu_{X\otimes Y}^{\textcolor{teal}{b} ,   \textcolor{red}{z}}$};
		\node [style=empty] (28) at (12, -5) {$\overset{ \textcolor{red}{z} \ast \textcolor{teal}{b}}{\TT}  (X\otimes Y)$};
		\node [style=none] (29) at (10, -6.75) {};
		\node [style=none] (30) at (11.25, -5.75) {};
	\end{pgfonlayer}
	\begin{pgfonlayer}{edgelayer}
		\draw [style=to] (0) to (1);
		\draw [style=to] (1) to (2);
		\draw [style=to] (6) to (7);
		\draw [style=to] (7) to (4);
		\draw [style=to] (8) to (9);
		\draw [style=to] (10) to (11);
		\draw [style=to] (14) to (15);
		\draw [style=to] (12) to (13);
		\draw [style=equal-arrow] (29.center) to (30.center);
	\end{pgfonlayer}
\end{tikzpicture}\]

	If $(Z', \iota')$ and $(Z, \iota)$ are two graded central cones of $\TT$ at $(\textcolor{red}{z},X)$,
	a \emph{morphism of graded central cones} 
	$\varphi : (Z', \iota') \to 
	(Z, \iota)$ is a morphism $\varphi : Z' \to Z,$ such that $\iota \circ \varphi = \iota'.$ 
	Graded central cones of $\TT$ at $(\textcolor{red}{z},X)$ form a category
  and a \emph{terminal graded central cone} of $\TT$ at $(\textcolor{red}{z},X)$
  is a terminal object in that category.
\end{definition}

\begin{proposition}
\label{prop:unique-graded}
  If a terminal graded central cone for a $\GG$-graded strong monad $\TT$ at
  $(\textcolor{red}{z},X)$ exists, then it is unique up to a unique isomorphism
  of graded central cones. Also, if $(Z, \iota)$ is a terminal graded central
  cone, then $\iota$ is a monomorphism.
\end{proposition}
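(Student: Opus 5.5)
The first claim is purely formal: a terminal graded central cone of $\TT$ at $(\textcolor{red}{z},X)$ is a terminal object of the category of graded central cones from Definition~\ref{def:central-cone-graded}. Terminal objects in any category are unique up to unique isomorphism. Concretely, suppose $(Z,\iota)$ and $(Z',\iota')$ are both terminal. Terminality gives unique cone morphisms $\varphi\colon Z'\to Z$ and $\psi\colon Z\to Z'$. Then $\varphi\circ\psi$ and $\mathrm{id}_Z$ are both cone morphisms $(Z,\iota)\to(Z,\iota)$, so they coincide by uniqueness; symmetrically $\psi\circ\varphi=\mathrm{id}_{Z'}$. Hence $\varphi$ is an isomorphism of graded central cones, and it is the only cone morphism between them.

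For the monomorphism claim, I would take $f,g\colon V\to Z$ with $\iota\circ f=\iota\circ g$ and show that $(V,\iota\circ f)$ is itself a graded central cone at $(\textcolor{red}{z},X)$. The diagram for $(V,\iota\circ f)$ is the diagram for $(Z,\iota)$ precomposed with $f\otimes\overset{\textcolor{teal}{b}}{\TT}Y$. The key observation is that both paths of the central-cone diagram begin with $\iota\otimes\overset{\textcolor{teal}{b}}{\TT} Y$, and
\[(\iota\circ f)\otimes\overset{\textcolor{teal}{b}}{\TT}Y=(\iota\otimes\overset{\textcolor{teal}{b}}{\TT}Y)\circ(f\otimes\overset{\textcolor{teal}{b}}{\TT}Y)\]
by functoriality of $\otimes$. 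So commutativity for $(Z,\iota)$ at every $Y$ and $\textcolor{teal}{b}$ immediately gives commutativity for $(V,\iota\circ f)$.

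Then $f$ and $g$ are both cone morphisms $(V,\iota\circ f)\to(Z,\iota)$, because $\iota\circ f=\iota\circ f$ and $\iota\circ g=\iota\circ f$. By uniqueness in the terminal property, $f=g$, so $\iota$ is monic.

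There is no real obstacle here. The only step needing care is checking that precomposing with $f\otimes\overset{\textcolor{teal}{b}}{\TT}Y$ preserves the defining equation, which rests solely on bifunctoriality of $\otimes$.
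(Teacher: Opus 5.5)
Your proposal is correct and takes essentially the same route as the paper. Uniqueness is the standard argument for terminal objects. Monicity follows by noting that $(V,\iota\circ f)$ is again a graded central cone (the paper's precomposition Lemma~\ref{lem:precompose-central-graded}), so $f$ and $g$ are two cone morphisms into the terminal cone and must coincide (Lemma~\ref{lem:monic}).
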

\begin{proof}
  Straightforward, essentially the same as in \cite{central-submonads}.
\end{proof}

In particular, Definition \ref{def:centre-set-graded} gives a terminal graded central cone
for the special case of graded monads over $\Set$.

\begin{definition}[Centralisable Graded Monad]
\label{def:centralisable-graded}
	We say that the $\GG$-graded monad $\TT$ over $\CC$ is \emph{centralisable} if, for any object $X$ in $\CC$, 
	for any element $\textcolor{red}{z}$ in $\ZZ(\GG)$, 
	a terminal graded central cone of $\TT$ at $(\textcolor{red}{z},X)$ exists. 
	In this situation, we write
	$(\overset{\textcolor{red}{z}}{\ZZ} X, \overset{\textcolor{red}{z}}{\iota} _X)$ 
	for the terminal graded central cone of $\TT$ at $(\textcolor{red}{z},X)$.
\end{definition}

For a centralisable $\GG$-graded monad $\TT$, the next theorem shows that its
terminal graded central cones induce a commutative $\ZZ(\GG)$-graded submonad
$\ZZ$ of $\TT$, which we call the \emph{centre} of $\TT$.

\begin{theorem}[Centre]\label{th:submonad-from-cone-graded}
	If the $\GG$-graded monad $\TT$ is centralisable, then the assignment $\ZZ(-)$ extends to a
	commutative $Z(\GG)$-graded monad $(\ZZ, \eta^\ZZ, \mu^\ZZ, \tau^\ZZ)$ on $\CC$, called the \emph{centre} of $\TT$. 
	Moreover, $\ZZ$ is a commutative $\ZZ(\GG)$-graded submonad of $\TT$ and the family of morphisms 
	$\overset{\textcolor{red}{z}}{\iota}_X : \overset{\textcolor{red}{z}}{\ZZ} X
	\to \overset{\textcolor{red}{z}}{\TT} X$, 
	determine a monomorphism of strong graded monads 
	$\ZZ \to \TT$ in the sense of Definition \ref{def:morph-monad-graded}.
\end{theorem}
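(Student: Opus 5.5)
The plan is to define every piece of structure of $\ZZ$ by exhibiting a suitable graded central cone and then invoking terminality. The required equations are then obtained by post-composing with the monomorphisms $\overset{\textcolor{red}{z}}{\iota}$ from Proposition \ref{prop:unique-graded}.

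\textbf{Functoriality and order action.} For $f\colon X\to X'$, I will show that $(\overset{\textcolor{red}{z}}{\ZZ}X,\ \overset{\textcolor{red}{z}}{\TT}f\circ\overset{\textcolor{red}{z}}{\iota}_X)$ is a graded central cone at $(\textcolor{red}{z},X')$. This uses naturality of $\tau,\tau',\mu$ in $X$. Terminality then gives $\overset{\textcolor{red}{z}}{\ZZ}f$, and functoriality follows since $\iota$ is monic.

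For $\textcolor{red}{z}\le\textcolor{red}{z'}$ in $Z(G)$, the cone is $\overset{\textcolor{red}{z}\le\textcolor{red}{z'}}{\TT}\circ\iota$. Checking that it is central needs naturality of $\mu$ in the gradation, together with compatibility of $\tau,\tau'$ with $\overset{\le}{\TT}$. That compatibility is not literally among the listed axioms, so I will assume it as part of what a pomonoid-graded strength means.

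\textbf{Unit, multiplication and strength.} The three cones I need are:
\begin{itemize}
\item unit: $(X,\eta_X)$ at $(i,X)$;
\item multiplication: $(\overset{\textcolor{red}{z}}{\ZZ}\overset{\textcolor{red}{z'}}{\ZZ}X,\ \mu^{\textcolor{red}{z},\textcolor{red}{z'}}\circ\overset{\textcolor{red}{z}}{\TT}\overset{\textcolor{red}{z'}}{\iota}\circ\overset{\textcolor{red}{z}}{\iota})$ at $(\textcolor{red}{z}\ast\textcolor{red}{z'},X)$;
\item strength: $(W\otimes\overset{\textcolor{red}{z}}{\ZZ}X,\ \tau\circ(W\otimes\iota))$ at $(\textcolor{red}{z},W\otimes X)$.
\end{itemize}
Once these are cones, terminality yields $\eta^\ZZ$, $\mu^\ZZ$ and $\tau^\ZZ$. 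The graded monad and strength axioms, and naturality of all components, then follow by composing with $\iota$ and using that $\iota$ is monic. This shows that $\iota$ commutes with all the structure. Hence $(\mathrm{incl},\iota)$ is a morphism of strong graded monads in the sense of Definition \ref{def:morph-monad-graded} whose components are monomorphisms.

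The unit cone follows from the unit laws, the equations $\tau\circ(\mathrm{id}\otimes\eta)=\eta$ and $\tau'\circ(\eta\otimes\mathrm{id})=\eta$, and naturality of $\mu$. The strength cone reduces to the centrality of $\iota$ via the coherence of $\tau$ with $\alpha$ and with $\mu$. This uses the symmetric structure to move $W$ past $\overset{\textcolor{teal}{b}}{\TT}Y$, in the same way as the ungraded case of \cite{central-submonads}.

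\textbf{Main obstacle.} I expect the multiplication cone to be the hardest step. The equation to prove equates two maps $\overset{\textcolor{red}{z}}{\TT}\overset{\textcolor{red}{z'}}{\TT}X\otimes\overset{\textcolor{teal}{b}}{\TT}Y\to\overset{\textcolor{red}{z}\ast\textcolor{red}{z'}\ast\textcolor{teal}{b}}{\TT}(X\otimes Y)$. I plan a direct diagram chase with three moves:
\begin{enumerate}
\item Push $\mu$ past $\tau'$ using the $\mu$-axiom of the costrength.
\item Commute the outer $\overset{\textcolor{red}{z}}{\TT}$-layer past $\overset{\textcolor{teal}{b}}{\TT}Y$ using centrality of $\overset{\textcolor{red}{z}}{\iota}$. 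This turns the comparison into one at gradation $\textcolor{red}{z}\ast\textcolor{teal}{b}$.
\item Inside that layer, apply centrality of $\overset{\textcolor{red}{z'}}{\iota}$, mapped under $\overset{\textcolor{red}{z}}{\TT}$, at gradation $\textcolor{teal}{b}$.
\end{enumerate}
Associativity of $\mu$ then reassembles the result, and the identifications $\textcolor{red}{z'}\ast\textcolor{teal}{b}=\textcolor{teal}{b}\ast\textcolor{red}{z'}$ and $\textcolor{red}{z}\ast\textcolor{teal}{b}=\textcolor{teal}{b}\ast\textcolor{red}{z}$ make the gradations match. Centrality of $\textcolor{red}{z},\textcolor{red}{z'}$ in the pomonoid is exactly what lets both sides land in the same $\overset{\cdot}{\TT}$, which is why $\ZZ$ is graded only by $\ZZ(\GG)$. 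Step 2 requires naturality of $\mu$ and $\tau$ in the inner functor, and I expect this to be the large diagram.

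\textbf{Commutativity.} For $\ZZ$ itself, the commutativity square of Definition \ref{def:commutative-monad-graded} for $\overset{\textcolor{red}{z}}{\ZZ}X\otimes\overset{\textcolor{red}{z'}}{\ZZ}Y$ will be reduced, after post-composing with the mono $\iota$, to the centrality square of $\overset{\textcolor{red}{z}}{\iota}$ precomposed with $\mathrm{id}\otimes\overset{\textcolor{red}{z'}}{\iota}$, taking $\textcolor{teal}{b}=\textcolor{red}{z'}$. This holds because $\iota$ commutes with $\tau,\tau',\mu$.
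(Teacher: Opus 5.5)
Your proposal is correct and follows the paper's proof. You induce $\overset{z}{\ZZ}f$, $\eta^\ZZ$, $\mu^\ZZ$ and $\tau^\ZZ$ from the same cones the paper uses ($\overset{z}{\TT}f\circ\iota$, $\eta$, $\mu\circ\overset{z}{\TT}\iota\circ\iota$, $\tau\circ(W\otimes\iota)$). You get all laws by cancelling the mono $\iota$, and you check the multiplication cone by the same chase through the costrength $\mu$-law and the centrality of both the outer and inner $\iota$. One small slip there: that equation holds on $\overset{z}{\ZZ}\overset{z'}{\ZZ}X\otimes\overset{b}{\TT}Y$, not on $\overset{z}{\TT}\overset{z'}{\TT}X\otimes\overset{b}{\TT}Y$ as you wrote. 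Commutativity then reduces, as in the paper, to centrality of $\iota$ precomposed with $\mathrm{id}\otimes\iota$. Your extra construction of the order maps $\overset{z\le z'}{\ZZ}$ from the cone $\overset{z\le z'}{\TT}\circ\iota$ does need the strengths to commute with the maps $\overset{a\le a'}{\TT}$, which the paper only implies through $[\CC,\CC]_s$. It covers a point the paper's proof skips.
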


\section{Examples of Centres of Strong Graded Monads}
\label{sec:examples}

In this section we provide more examples of strong graded monads that admit
centres. We begin with a concrete example in $\Set$ and we discuss the role of
the gradations.

\begin{example}\label{ex:writer-monad-graded}
  The multi-error graded writer monad is a writer monad which stops the
  computation whenever it encounters an error.
	For the gradation, consider a monoid $\GG = (\{t, e,w_1...w_n\},t,\ast)$ of three kinds of possible outcomes: 
	$t$ indicates the result is not an error nor warning, 
	$w_{i}$ are arbitrary elements in the set of warnings, and
	$e$ is an error. The neutral element is $t.$

	The monoid composition is defined as follows:
	\begin{center}
	\setlength\extrarowheight{3pt}
	\noindent\begin{tabular}{c | c c c c}
		$\ast$ & $t$ & $e$ & $w_{{a}}$ & $w_{{b}}$ \\
		\cline{1-5}
		$t$ & $t$ & $e$ & $w_{{a}}$ & $w_{{b}}$\\
		$e$ & $e$ & $e$ & $e$ & $e$ \\
		$w_{{a}}$ & $w_{{a}}$ & $e$ & $w_{{a}}$ & $w_{{b}}$\\
		$w_{{b}}$ & $w_{{b}}$ & $e$ & $w_{{a}}$ & $w_{{b}}$\\
	\end{tabular}
	\end{center}
	The rules $w_{{a}}\ast w_{{b}} = w_{{b}}$ and $w_{{b}} \ast w_{{a}} = w_{{a}}$
	indicate that we only keep track of the first warning encountered. Thus the centre of $\GG$ is $\ZZ(\GG) = (\{ t, e\}, t, \ast)$.

	Now we can define the multi-error writer monad as the writer monad $\TT$ graded by $\GG$, 
  corresponding to the different annotations of outcomes:
    \[ \overset{e}{\TT} X \defeq 1,\quad \overset{t}{\TT} X \eqdef X,\quad \overset{w_{{a}}}{\TT} X \eqdef X \times \{{a}\}\quad \overset{w_{{b}}}{\TT} X \eqdef X \times \{{b}\}. \]
  The unit is given by $\eta(x)=x\in\overset{t}{\TT}$ and the multiplication by
    \[ \mu_{w_i,w_j}((x,i),j) \eqdef i,\quad \mu_{t,w_i}(x,i) \eqdef \mu_{w_i,t}(x,i) \eqdef (x,i),\quad \mu_{t,t}(x) \eqdef x. \]
	Its centre is a commutative $(\{ t, e\}, t, \ast)$-graded submonad.
	It consists of two endofunctors that are given by $\overset{e}{\TT} X = 1$ and $\overset{t}{\TT} X = X$.
\end{example}

The category $\Set$ is not the only category for which we can systematically construct centers of strong graded monads.
Just like in \cite{central-submonads}, there are many such categories. Next, we provide such an example.

\begin{example}
	Let $T$ be a strong $\GG$-graded monad on $\TOP$, the category of topological spaces and continuous maps between them.
  The strength is considered with respect to the Cartesian structure of $\TOP$. Then $T$ is centralisable with
  terminal central cones given by
	\[
        \overset{\textcolor{red}{z}}{\ZZ} X \eqdef
        \left\{ t \in \overset{\textcolor{red}{z}}{\TT} X \ \middle |\
        \begin{matrix}
	  \forall \textcolor{teal}{b} \in G,
	  \forall Y \in \Ob(\TOP), \forall s \in \overset{\textcolor{teal}{b}}{\TT} Y, \\
	   \mu(\overset{\textcolor{red}{z}}{\TT}\tau'(\tau(t,s))) = 
	   \mu(\overset{\textcolor{teal}{b}}{\TT}\tau(\tau'(t,s)))
        \end{matrix}
           \right\} ,
	\]
  where, as usual, $\textcolor{red}{z}\in \ZZ(\GG)$ and the topology on
  $\overset{\textcolor{red}{z}}{\ZZ} X$ is the subspace topology inherited from
  the inclusion $\overset{\textcolor{red}{z}}{\ZZ} X \subseteq
  \overset{\textcolor{red}{z}}{\TT } X . $ 
\end{example}

The next example should be no surprise.

\begin{example}
	Every commutative graded monad is naturally isomorphic to its centre.
\end{example}



It was shown in \cite{central-submonads} that not every strong monad is
centralisable. Since strong graded monads are more general, then it should be
clear that not every strong graded monad has a centre. However, just like in
\cite{central-submonads}, the notion of centre for strong graded monads is
ubiquitous and we do not know of any natural counter-examples (i.e.
counter-examples which were not constructed for this sole purpose as in
\cite{central-submonads}).

\section{Central Graded Submonads}
\label{sec:real-central-graded}

We can introduce \emph{central graded submonads} of a strong graded monad in
analogy to the construction in \cite{central-submonads}.

\begin{theorem}[Centrality]\label{th:centrality}
	Let $\ZZ(\GG) = (Z(G),i,\ast)$ be the centre of a pomonoid $\GG = (G,i,\ast)$. 
  Let $\CC$ be a symmetric monoidal category and $\TT$ a strong $\GG$-graded monad on it. Assume that $\TT$ has a centre $\ZZ$ that is graded by $\ZZ(\GG)$.
	Let $\SC$ be a strong $\ZZ(\GG)$-graded submonad of $\TT$ with
	$\iota:\SC \to \TT$ the strong submonad monomorphism.  
	The following are equivalent:
  \begin{enumerate}
    \item[1)] \label{ccond:1} For any element $\textcolor{red}{z}$ in $\ZZ(\GG)$, 
	any object $X$ of $\CC$, we have that $(\overset{\textcolor{red}{z}}{\SC} X,\overset{\textcolor{red}{z}}{\iota}_X)$ is a graded central cone for $\TT$ at $(\textcolor{red}{z},X)$;
    \item[2)] $\SC$ is a commutative graded submonad of the centre of $\TT$ with submonad morphism $\iota^{\SC} \colon \SC \to \ZZ$ such that $\iota \colon \SC \to \TT$ factorises through $\iota^{\SC}.$
  \end{enumerate}
\end{theorem}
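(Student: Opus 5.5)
We prove the two implications separately, relying on the universal property of the terminal graded central cones $(\overset{\textcolor{red}{z}}{\ZZ} X,\overset{\textcolor{red}{z}}{\iota}_X)$ from Definition~\ref{def:centralisable-graded} and on the monad structure of $\ZZ$ given by Theorem~\ref{th:submonad-from-cone-graded}. Throughout, write $\overset{\textcolor{red}{z}}{\iota}^{\ZZ}_X \colon \overset{\textcolor{red}{z}}{\ZZ} X\to\overset{\textcolor{red}{z}}{\TT}X$ for the inclusion of the centre.

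\emph{(1) $\Rightarrow$ (2).} Since each $(\overset{\textcolor{red}{z}}{\SC} X,\overset{\textcolor{red}{z}}{\iota}_X)$ is a graded central cone, terminality yields a unique morphism of cones $\overset{\textcolor{red}{z}}{\iota}{}^{\SC}_X\colon \overset{\textcolor{red}{z}}{\SC}X\to\overset{\textcolor{red}{z}}{\ZZ}X$ with $\overset{\textcolor{red}{z}}{\iota}^{\ZZ}_X\circ\overset{\textcolor{red}{z}}{\iota}{}^{\SC}_X=\overset{\textcolor{red}{z}}{\iota}_X$. This gives the factorisation immediately. Moreover $\iota^{\SC}$ is monic, because its composite with $\iota^{\ZZ}$ is the monomorphism $\iota$.

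It remains to show that $\iota^{\SC}$ is a morphism of strong graded monads, with $\phi=\mathrm{id}_{Z(G)}$. The key tool is that $\iota^{\ZZ}$ is monic (Proposition~\ref{prop:unique-graded}) and is itself a morphism of strong graded monads (Theorem~\ref{th:submonad-from-cone-graded}). Consequently, each required diagram can be checked after postcomposing with $\iota^{\ZZ}$, where it reduces to the corresponding diagram for $\iota=\iota^{\ZZ}\circ\iota^{\SC}$, which holds by hypothesis. We proceed in three steps:
\begin{enumerate}
\item \emph{Naturality.} Both $\iota^{\ZZ}\circ\overset{\textcolor{red}{z}}{\ZZ}f\circ\iota^{\SC}$ and $\iota^{\ZZ}\circ\iota^{\SC}\circ\overset{\textcolor{red}{z}}{\SC}f$ equal $\overset{\textcolor{red}{z}}{\TT}f\circ\iota$, so the naturality squares commute.
\item \emph{Unit and strength.} These are handled the same way: compose with $\iota^{\ZZ}$ and use that both $\iota$ and $\iota^{\ZZ}$ preserve the unit and the strength.
\item \emph{Multiplication and order maps.} These need care, since $\mu^{\ZZ}$ involves $\overset{\textcolor{red}{z}}{\ZZ}\overset{\textcolor{teal}{z'}}{\ZZ}$. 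After composing with $\iota^{\ZZ}$, we must rewrite $\iota\cdot\iota$ (horizontal composite) as $(\iota^{\ZZ}\cdot\iota^{\ZZ})\circ(\iota^{\SC}\cdot\iota^{\SC})$ using naturality of $\iota^{\ZZ}$. After that rewriting the check is formal.
\end{enumerate}
Commutativity of $\SC$ then follows: $\SC$ is a strong submonad of the commutative graded monad $\ZZ$, and the commutativity diagram for $\SC$, postcomposed with the mono $\iota^{\ZZ}\circ\iota^{\SC}$ (at grade $\textcolor{red}{z}\ast\textcolor{teal}{z'}$), becomes the one for $\ZZ$ precomposed with $\iota^{\SC}\otimes\iota^{\SC}$.

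\emph{(2) $\Rightarrow$ (1).} Given the factorisation $\overset{\textcolor{red}{z}}{\iota}_X=\overset{\textcolor{red}{z}}{\iota}^{\ZZ}_X\circ \overset{\textcolor{red}{z}}{\iota}{}^{\SC}_X$, the central-cone diagram for $(\overset{\textcolor{red}{z}}{\SC}X,\overset{\textcolor{red}{z}}{\iota}_X)$ is the diagram for the cone $(\overset{\textcolor{red}{z}}{\ZZ}X,\iota^{\ZZ})$ precomposed with $\iota^{\SC}\otimes\overset{\textcolor{teal}{b}}{\TT}Y$. Since precomposing a commuting diagram preserves commutativity, the first is a graded central cone because the second is.

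The main obstacle is the multiplication compatibility in step~3 of (1)$\Rightarrow$(2). There one must check that horizontal composites of the inclusions factor correctly, and that the graded order maps $\overset{\textcolor{red}{z}\le\textcolor{red}{z'}}{\SC}$ are carried to those of $\ZZ$. The latter again follows by cancelling the mono $\iota^{\ZZ}$, since both sides become $\overset{\textcolor{red}{z}\le\textcolor{red}{z'}}{\TT}\circ\iota$.
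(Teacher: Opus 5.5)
Your proposal is correct. For the factorisation and for $(2\Rightarrow 1)$ it coincides with the paper. In both, $\iota^{\SC}$ comes from terminality of the cones $(\overset{z}{\ZZ}X,\overset{z}{\iota}{}^{\ZZ}_X)$, and the converse is Lemma~\ref{lem:precompose-central-graded} applied with $g=\overset{z}{\iota}{}^{\SC}_X$.

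The real difference is how you obtain commutativity of $\SC$.

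\emph{The paper's route.} It re-runs the chase of Diagram~\eqref{eq:proof-commutative-graded} with $\SC$ in place of $\ZZ$. That chase uses only that $\iota\colon\SC\to\TT$ is monic, preserves strength and multiplication, and has central components (hypothesis 1). It therefore needs neither the commutativity of $\ZZ$ nor prior knowledge that $\iota^{\SC}$ is a strong monad morphism. It also shows more generally that every strong graded submonad with central inclusion is commutative, whether or not a centre exists.

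\emph{Your route.} You first check, by cancelling the mono $\iota^{\ZZ}$, that $\iota^{\SC}$ is a monomorphism of strong graded monads. The paper only asserts this step (``these factorisations determine the submonad morphism''). You then pull the commutativity of $\ZZ$ back along $\iota^{\SC}$. This spares a second large diagram chase and makes the morphism verification explicit. The cost is that it leans on the full content of Theorem~\ref{th:submonad-from-cone-graded}, including the order maps $\overset{z\le z'}{\ZZ}$ and their compatibility with $\iota^{\ZZ}$, which the paper leaves implicit.

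A minor presentational point: it is cleanest to postcompose the two commutativity paths of $\SC$ with $\iota^{\SC}$ alone. They then become the corresponding paths of $\ZZ$ precomposed with $\iota^{\SC}\otimes\iota^{\SC}$. You should also state that $\iota^{\SC}$ commutes with the costrengths, which follows from strength preservation and naturality of $\gamma$.
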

\begin{proof}
	$(1\Rightarrow 2).$ 
      Commutativity follows with the same arguments as in Diagram \ref{eq:proof-commutative-graded} (Appendix \ref{sec:app}) by replacing $\ZZ$ with $\mathcal S.$
      Each $\overset{\textcolor{red}{z}}{\iota}_X \colon \overset{\textcolor{red}{z}}{\SC} X \to \overset{\textcolor{red}{z}}{\TT} X$
      factorizes through the terminal central cone $\iota^\ZZ_X$ (by definition).
			These factorisations determine the submonad morphism $\SC \to \ZZ$.
	
	$(2\Rightarrow 1):$ 
      This follows by Lemma~\ref{lem:precompose-central-graded} in Appendix \ref{sec:app}.
	\end{proof}

\begin{definition}[Central Graded Submonad]\label{def:central-sub}
	Given a strong graded submonad $\SC$ of $\TT$, we say that $\SC$ is a \emph{central
	graded submonad} of $\TT$ if it satisfies one of the two equivalent conditions
	from Theorem \ref{th:centrality}.
\end{definition}

Every central graded submonad is commutative and Theorem \ref{th:centrality}
shows that the centre (whenever it exists) can be seen as the largest central
graded submonad of $\TT$.

\section{Relaxations}
\label{sec:lax_commu}

It is unfortunate that the definition of centre, for graded monads, is not able
to make use of the order of the pomonoid. When we started this project, we were
hopping for the inclusion $\ZZ(\GG,\ast)\subseteq \GG$ to use a multiplication
$(\circledast)\neq(\ast)$ which seems reasonable since the inclusion only
requires that $a \circledast b \le a \ast b$. Such a result would mean that the
constructed centre would be able not only to restrict the graded monad to
commutative elements, but also to preserve some non-commutative ones by
approximating their interaction with others. The centre, however, is arguably a
notion that is too universal and which we think prevents us from constructing
such a non-free operation $(\circledast)$.

In this section, we explore perspectives that may allow us to overcome this issue and that make use of the order. 
Intuitively, one of the issues that we faced, was the absence of structure in the pomonoid other than the multiplication and the order.
As a result of this, it is difficult, if not impossible, to canonically construct the new approximated multiplication.
Therefore, we believe that we have to use more structure than that of a pomonoid.
Towards this end, we can see at least three natural extensions:
\begin{itemize}
  \item A natural extension consists in assuming that the commutative
    $(\circledast)$ operation is given in the pomonoid but not for the graded
    monad. We can then try to pull it over the gradation using the central
    construction.
  \item We could consider the order to be much richer, in particular, we could
    require the existence of a quantale, which is, intuitively, a pomonoid over a complete
    lattice. Then we could try to construct limits over all possible
    commutations of elements, generalizing the idea of shuffle. However, this
    is not possible to do in an arbitrary quantale, not even in every quantale over
    free complete lattices because this does not preserve the associativity of
    the new commutative operation.
  \item The last direction is relaxing the notion of commutativity itself,
    hence relaxing the constraints over the centre.
\end{itemize}

\subsection{Bimonoids}

\begin{definition}[A Bimonoid]
  A \emph{bimonoid} is a pomonoid $(G,\le,i,\ast)$ with an additional symmetric monoidal structure $(j,\circledast )$ such that
  \begin{equation}
    a\ast b \le a\circledast b \label{eq:bimonoid}
  \end{equation}
  We use $\delta$ to refer to the above inequality.
\end{definition}

\begin{remark}
The name bimonoid is used for many structures and the above definition is non-standard.
\end{remark}

\begin{property}
  Let $ \GG = (G,\le, i, \ast,j,\circledast )$ be a bimonoid.
  An ordered $(\GG,\le,i,\ast)$-graded monad $\TT$ on a monoidal category $\CC$
  is also an ordered $(\GG,\le,i,\circledast)$-graded monad with the same
  structure except for the multiplication
  $\mu^\circledast_{a,b,X}=\mu^\ast\overset{a\ast b\le a\circledast b}\TT$.
\end{property}

\begin{definition}[Bimonoidal Graded Centre in $\Set$]
  Let $\TT$ be a strong $\GG$-graded monad on $\Set$, so that $\GG$ also has a bimonoidal operation $(\circledast )$.

  For an arbitrary $\textcolor{red}{a}\in \GG$, 
	we say that the $\GG$-graded \emph{centre} of $\TT$ relative to $(\circledast )$ at $(\textcolor{red}{a},X)$, 
	written $\overset{\textcolor{red}{a}}{\ZZ_{\circledast }} X$, is the set 
	\[
	  \overset{\textcolor{red}{a}}{\ZZ_{\circledast }} X \eqdef
          \left\{ t \in \overset{\textcolor{red}{a}}{\TT} X \
          \middle|\
          \begin{matrix}
	    \forall \textcolor{teal}{b} \in G,
	    \forall Y \in \Ob(\Set), \forall s \in \overset{\textcolor{red}{a}}{\TT} Y, \\
	    \overset{a\ast b\le a\circledast b}{\TT}(\mu(\overset{\textcolor{red}{a}}{\TT}\tau'(\tau(t,s)))) = 
	    \overset{b\ast a\le a\circledast b}{\TT}(\mu(\overset{\textcolor{teal}{b}}{\TT}\tau(\tau'(t,s))))
          \end{matrix}
           \right\} .
	\]
	We write $\overset{\textcolor{red}{a}}{\iota}_X : 
	\overset{\textcolor{red}{a}}{\ZZ_{\circledast }} X \subseteq \overset{\textcolor{red}{a}}{\TT} X$ 
	for the indicated subset inclusion.
\end{definition}

The $\GG$-graded centre of $\TT$ relative to $(\circledast)$ is then
commutative on its $(\GG,\circledast)$ gradation but not on its $(\GG,\ast)$
gradation. Since the first gradation is an approximation of the second one, it is suitable for
interpreting programs where we sometimes know of the evaluation order
(e.g. composition) and where we sometimes do not (e.g. binary operators).

\begin{example}
  A simple way to construct an example is to consider a monad graded by a
  pomonoid $\GG$ with an absorbing top element $\top$. In this case we can
  define $a\circledast b$ to be $a\ast b$ when $a,b\in Z(\GG)$ and
  $a\circledast b=\top$ otherwise. This way, the restriction to the $\top$
  grade is also a monad, and we work at the same time with the centre of the
  graded monad when using gradations $a\in Z(\GG)$, the centre of the
  $\overset{\top}\TT$ when we can't compute the grade, and still have access to
  the previous grades when not using $\mu^\circledast$.
\end{example}

This new notion of centre, which can be generalized to other categories, offers
a sightly richer structure since the gradation is not $\ZZ(G)$ but
$(G,\circledast)$.  But it requires knowledge of $\circledast$ and many of the
limitations of the centre still persist.

\subsection{Quantale}
The inspiration for our next idea is as follows: if we had access to sups in the pomonoid, we could try to construct $\circledast$.

A quantale is a monoid in the category of complete lattices and sup-preserving functions. Notice that the tensor product that we have to use on complete lattices is the right adjoint of the arrow $[A\Rightarrow B]$ of sup-preserving functions ordered pointwise. However, this is not an object that is easy to use. Fortunately, on complete lattices freely generated from a poset (the free functor is associating a poset to the complete lattice of its initial segments), it corresponds to the usual tensor on the underlying poset. We are thus focusing on the quantales obtained as the free completions of pomonoids.

Under some reasonable conditions (e.g. $\GG$ has all $\kappa$-colimits, and $\overset{\textcolor{red}{a}}{\TT}$ are $\kappa$-ary functors) a monad graded by a pomonoid $\GG$ seems to be definable as a monad graded by the quantale $\mathcal{I}(\GG)$ by computing the left Kan extension of $\TT:\GG\rightarrow[\CC,\CC]$ along the inclusion $\GG\rightarrow \mathcal{I}(\GG)$ in \texttt{MonCat}.\footnote{The difficulty here is that doing this in \texttt{MonCat} is much trickier than doing it in \texttt{Cat}.}

Thus finding a commutative over-approximant $\circledast$ of $(\ast)$ in $\mathcal{I}(\GG)$ will permit, via the above construction, to obtain a commutative graded monad. The question is how to obtain $\circledast$. 

A natural candidate is $a\circledast b \eqdef (a\ast b)\vee(b\ast a)$, but, unfortunately, it is generally not an associative operation. A more reasonable approach consists in computing the shuffle operation, or rather a generalization of it. In this case most examples generate an associative operation, but not all. A free construction is yet to be discovered for a good over-approximation.

\subsection{Duoids and Concurrent Monads}

In order to go in this last direction, we first need to investigate how to relax the notion of commutativity.
Indeed, there is no way to change the monadic composition along this new operation over grades, but we can ``approximate'' the composition along this operation.

Commutativity, in its standard form, is
difficult to relax naturally. However, one can present commutativity of a monad
through its monoidality.

\begin{lemma}
  A monad $\TT$ in a monoidal category $(\CC,\otimes,e)$ is commutative iff 
  there exists a natural transformation $m_{X,Y} : \TT X\otimes\TT Y\rightarrow \TT(X\otimes Y)$ such that :
  \begin{center}
  \begin{tikzpicture}
    \node (TT) at (0,2) {$\TT\TT X\otimes\TT\TT Y$};
    \node (DT) at (6,2) {$\TT (\TT X\otimes\TT Y)$};
    \node (DD) at (12,2) {$\TT\TT (X\otimes Y)$};
    \node (D) at (12,0) {$\TT (X\otimes Y)$};
    \node (T) at (0,0) {$\TT X\otimes\TT Y$};
    \draw[->] (TT) to node [auto] {{\scriptsize $m$}} (DT);
    \draw[->] (DT) to node [auto] {{\scriptsize $\TT m$}} (DD);
    \draw[->] (DD) to node [auto] {{\scriptsize $\mu$}} (D);
    \draw[->] (TT) to node [auto] {{\scriptsize $\mu\otimes\mu$}} (T);
    \draw[->] (T) to node [auto] {{\scriptsize $m$}} (D);
  \end{tikzpicture}
  \end{center}
  and
  $$(\eta\otimes\eta);m=\eta \quad (m\otimes\id);m;\TT\alpha=\alpha(\id\otimes m);m $$
  $$(\eta\otimes\id);m;\TT\lambda=\lambda \quad (\id\otimes\eta);m;\TT\rho=\rho$$
\end{lemma}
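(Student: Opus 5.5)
The plan is to prove the two directions separately. Throughout, $\CC$ is taken to be symmetric monoidal, since commutativity is phrased via the costrength $\tau'$, which needs the symmetry. The statement is the ungraded special case of Definition \ref{def:commutative-monad-graded} (trivial pomonoid). The backward direction only recovers some strength from $m$, so I read the lemma as saying that $m$ is compatible with the given strength: $\tau = m\circ(\eta\otimes\id)$ and $\tau' = m\circ(\id\otimes\eta)$. If that reading is wrong, the backward direction only proves commutativity for the strength induced by $m$.

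\emph{Forward direction.} Assume $\TT$ is commutative and set
\[ m \eqdef \mu\circ\TT\tau'\circ\tau = \mu\circ\TT\tau\circ\tau' . \]
Then:
\begin{itemize}
\item Naturality of $m$ follows from naturality of $\tau,\tau',\mu$.
\item The unit laws come from the strength axioms: $\tau\circ(\id\otimes\eta)=\eta$ and $\tau'\circ(\eta\otimes\id)=\eta$, together with the unit laws of $\mu$.
\item $(\eta\otimes\id);m;\TT\lambda=\lambda$ follows from the unitor axiom of the strength.
\item The associativity equation follows from the associativity axioms of $\tau$ and $\tau'$. One uses whichever of the two equal expressions for $m$ is convenient on each side, so that the strengths slide past each other.
\end{itemize}
The only delicate check is the multiplication square. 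There I will use the standard argument: $\mu$ commutes with $\tau$ and $\tau'$ (the strength/multiplication axioms), and commutativity is used once in the middle to swap the order in which the two inner layers are merged. This step is the main obstacle and is a routine but sizeable diagram chase.

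\emph{Backward direction.} Assume $m$ satisfies the four laws. The key step is to precompose the multiplication square with $\eta_{\TT X}\otimes \TT\eta_Y$.
\begin{itemize}
\item Along the bottom path, $(\mu\circ\eta)\otimes(\mu\circ\TT\eta)=\id$, so we get $m$.
\item Along the top path, naturality of $m$ rewrites
\[ m_{\TT X,\TT Y}\circ(\eta_{\TT X}\otimes\TT\eta_Y)=\TT(\id\otimes\eta)\circ m_{\TT X,Y}\circ(\eta\otimes\id)=\TT(\id\otimes\eta)\circ\tau . \]
Hence the top path becomes $\mu\circ\TT(m\circ(\id\otimes\eta))\circ\tau=\mu\circ\TT\tau'\circ\tau$.
\end{itemize}
So $m=\mu\circ\TT\tau'\circ\tau$. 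Precomposing symmetrically with $\TT\eta_X\otimes\eta_{\TT Y}$ gives $m=\mu\circ\TT\tau\circ\tau'$. The two composites are therefore equal, which is exactly commutativity.

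If one instead defines $\tau$ from $m$ (i.e.\ drops the compatibility reading), the extra work is to check that $m\circ(\eta\otimes\id)$ satisfies the strength axioms. These follow from the unit and associativity laws imposed on $m$.
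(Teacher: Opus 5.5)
The paper states this lemma without proof; it is the classical correspondence between commutative and monoidal monads, quoted only to motivate lax commutativity. So there is no argument of the paper to compare with.

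\textbf{Your main argument is correct.} Under the reading you adopt, namely $m\circ(\eta\otimes\mathrm{id})=\tau$ and $m\circ(\mathrm{id}\otimes\eta)=\tau'$, your argument is the standard one and it works. The backward computation (precomposing the multiplication square with $\eta_{TX}\otimes T\eta_Y$ and with $T\eta_X\otimes\eta_{TY}$) is right. It uses only naturality of $m$, the square, the two compatibilities and $\mu\circ\eta_T=\mu\circ T\eta=\mathrm{id}$. Two small additions are needed in the forward direction:
\begin{itemize}
\item Under your reading, the constructed $m$ must itself satisfy both compatibilities. So record that $m=\mu\circ T\tau'\circ\tau$ does. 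This follows in two lines from the unit axioms of $\tau$ and $\tau'$, naturality, and the monad unit laws.
\item The associativity law also uses the $\mu$-axioms of $\tau,\tau'$ and the mixed law relating $\tau$ and $\tau'$, not only their associativity axioms.
\end{itemize}

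\textbf{Your two fallback remarks are false.} These are ``the backward direction only proves commutativity for the strength induced by $m$'' and ``the extra work is to check that $m\circ(\eta\otimes\mathrm{id})$ satisfies the strength axioms''. Without compatibility with $\tau'$, your computation only gives $\mu\circ Ts\circ\tau_m=m=\mu\circ T\tau_m\circ s$, where $\tau_m=m\circ(\eta\otimes\mathrm{id})$ and $s=m\circ(\mathrm{id}\otimes\eta)$. But $s$ equals the costrength $T\gamma\circ\tau_m\circ\gamma$ only if $m$ is symmetric, $T\gamma\circ m=m\circ\gamma$. The lemma does not assume this, and the paper's notion of commutativity refers to the $\gamma$-induced costrength.

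Here is a counterexample. Take $\mathbb{Z}/2$-graded vector spaces over a field $k$ with $\mathrm{char}\,k\neq 2$ and even linear maps, with the Koszul symmetry $\gamma(x\otimes y)=(-1)^{|x||y|}\,y\otimes x$. Let $A=k[\theta]/(\theta^2-1)$ with $\theta$ odd, which is commutative but not supercommutative. Let $TX=A\otimes X$ be the writer monad, and set $m((a\otimes x)\otimes(b\otimes y))=ab\otimes x\otimes y$ with no sign. The four laws of the lemma never mention $\gamma$, and they hold because $A$ is commutative. Moreover $\tau_m(x\otimes b\otimes y)=b\otimes x\otimes y$ is a strength. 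Yet on $(\theta\otimes 1)\otimes(\theta\otimes 1)\in TI\otimes TI$, with $\tau'=T\gamma\circ\tau_m\circ\gamma$:
\begin{itemize}
\item $\mu\circ T\tau'\circ\tau_m$ gives $\theta^2\otimes 1\otimes 1$;
\item $\mu\circ T\tau_m\circ\tau'$ gives $-\theta^2\otimes 1\otimes 1$, the sign coming from $\gamma_{TI,TI}$ swapping two odd elements.
\end{itemize}
These differ, so $T$ is not commutative for the strength induced by $m$.

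Hence the hypothesis you imposed is genuinely necessary; equivalently, one can ask for compatibility with $\tau$ plus symmetry of $m$. It cannot be traded for merely checking the strength axioms of $\tau_m$. Those axioms, incidentally, also need the multiplication square (for the $\mu$-axiom), not just the unit and associativity laws.
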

  
This definition can be relaxed by orienting the main diagram.
\begin{definition}[Lax Commutative]
  A monad $\TT$ in an order-enriched monoidal category $(\CC,\otimes,e)$ is said to be lax commutative iff there exists a natural transformation $m_{X,Y} : \TT X\otimes\TT Y\rightarrow \TT(X\otimes Y)$ such that :
  \begin{center}
    \begin{tikzpicture}
      \node (TT) at (0,2) {$\TT\TT X\otimes\TT\TT Y$};
      \node (DT) at (6,2) {$\TT (\TT X\otimes\TT Y)$};
      \node (DD) at (12,2) {$\TT\TT (X\otimes Y)$};
      \node (D) at (12,0) {$\TT (X\otimes Y)$};
      \node (T) at (0,0) {$\TT X\otimes\TT Y$};
      \node () at (6,1) {$\Downarrow$};
      \draw[->] (TT) to node [auto] {{\scriptsize $m$}} (DT);
      \draw[->] (DT) to node [auto] {{\scriptsize $\TT m$}} (DD);
      \draw[->] (DD) to node [auto] {{\scriptsize $\mu$}} (D);
      \draw[->] (TT) to node [auto] {{\scriptsize $\mu\otimes\mu$}} (T);
      \draw[->] (T) to node [auto] {{\scriptsize $m$}} (D);
    \end{tikzpicture}
  \end{center}
         and
         $$(\eta\otimes\eta);m=\eta \quad (m\otimes\id);m;\TT\alpha=\alpha(\id\otimes m);m $$
         $$(\eta\otimes\id);m;\TT\lambda=\lambda \quad (\id\otimes\eta);m;\TT\rho=\rho$$
\end{definition}

Such a definition means that the monad is not commutative, but can be over-approximated as such. In terms of proper programs, it means that the monad is not commutative but it has been extended with a kind of non-determinism so that one can approximate the operator effects by considering all their eventual behaviors (left-right, right-left, but also interleaving or parallelism).

This concept is closely related with that of a concurrent monad, a recent 
concept which is basically a lax-commutative monad with an additional relaxation 
on the unit. In this work, we do not dwell on concurrent monads, no more than we 
indulge in bicategories, but we do use the algebraic object of which the 
former is a categorification: the duoids.

\begin{definition}[Duoid]
  A duoid is a pomonoid $(G,\le,i,\ast)$ with an additional symmetric monoidal structure $(j,\parallel )$ such that
  \begin{equation}
    (a\parallel c)\ast(b\parallel d) \le (a\ast b)\parallel (c\ast d)   \label{eq:duoid}
  \end{equation}
  We call $\delta$ the first inequality.
\end{definition}
One can see this as a poset $(G,\le)$ with two multiplicative operations $(\ast)$ and $(\parallel)$ making it a pomonoid in two different ways. 
The first half is non-commutative and represent the sequential composition, the second half is commutative and represents the parallel composition. 
The parallel composition can be seen as much less precise than the sequential composition. We can deduce from~\eqref{eq:duoid} that $a\ast b\le a\parallel b$, i.e. $(\parallel)$ is approximating $(\ast)$. Therefore, it can play the rôle of $(\circledast)$ in the introduction.
\begin{remark}
	The name duoid is used for many structures. Some non-equivalent definitions may or may not use another unit $i \le j$ for the operation $(\parallel )$, they may or may not require symmetry/braiding of $(\parallel )$ and/or $(\ast)$, they also can be strict/weak/colax. It is important for us to use exactly the above definition.
\end{remark}

\begin{definition}[Duoidal Gradation]
  Let $ \GG : (G,\le, i, \ast,j,\parallel )$ be a duoid. 
	A duoidal $\GG$-graded monad on a monoidal category $\CC$ is given by the following data:
        \begin{itemize}
        \item an ordered $(G,\le, i, \ast)$-graded monad $\TT$
        \item a transformation $m_{a,b,X,Y} : \overset{a}{\TT} X\otimes\overset{b }{\TT} Y\rightarrow \overset{a\parallel b}{\TT}(X\otimes Y)$ natural in $a$, $b$, $X$ and $Y$.
        \end{itemize}
	such that the following diagrams commute:
        \begin{center}
          \begin{tikzpicture}
            \node (TT) at (0,4) {$\overset{a}\TT\overset{b}\TT X\otimes\overset{c}\TT\overset{d}\TT Y$};
            \node (DT) at (8,4) {$\overset{a\parallel c}\TT (\overset{b}\TT X\otimes\overset{d}\TT Y)$};
            \node (DD) at (16,4) {$\overset{a\parallel c}\TT\ \overset{b\parallel d}\TT (X\otimes Y)$};
            \node (D1) at (8,0) {$\overset{(a\parallel c)\ast(b\parallel d)}\TT (X\otimes Y)$};
            \node (D2) at (16,1) {$\overset{(a\ast b)\parallel (c\ast d)}\TT (X\otimes Y)$};
            \node (T) at (0,0) {$\overset{a\ast b}\TT X\otimes\overset{c\ast d}\TT Y$};
            \draw[->] (TT) to node [auto] {{\scriptsize $m$}} (DT);
            \draw[->] (DT) to node [auto] {{\scriptsize $\TT m$}} (DD);
            \draw[->] (DD) to node [auto] {{\scriptsize $\mu$}} (D2);
            \draw[->] (TT) to node [auto] {{\scriptsize $\mu\otimes\mu$}} (T);
            \draw[->] (T) to node [auto] {{\scriptsize $m$}} (D1);
            \draw[->] (D2) to node [auto] {{\scriptsize $\overset{\textcolor{red}{\delta}}\TT$}} (D1);
          \end{tikzpicture}
        \end{center}
        as well as the other monoidal diagrams:
         $$(\eta\otimes\eta);m=\eta \quad (m\otimes\id);m;\TT\alpha=\alpha(\id\otimes m);m $$
         $$(\eta\otimes\id);m;\TT\lambda=\lambda \quad (\id\otimes\eta);m;\TT\rho=\rho$$
\end{definition}

\begin{example}
  Let $\Sigma$ be an alphabet, then $\mathcal{P}(\Sigma^*)$ is a duoid with the concatenation and shuffle operations
  $$ L\ast L' \eqdef \{ww'\mid w\in L, w'\in L'\} \quad i=\{\epsilon\}$$
  $$ L\parallel L' \eqdef \{w_1w'_1...w_nw'_n\mid w_1...w_n\in L, w'_1...w'_n\in L'\} \quad j=\epsilon$$
  This duoid is grading the corresponding writer monad:
  $$\overset{L}{\TT}X \eqdef \{(x,L')\mid x\in X, L'\subseteq L\}) \quad \overset{L}{\TT}f(x,L') \eqdef (f(x),L')$$
  $$\overset{L\subseteq L'}{\TT}X \texttt{ is the inclusion of sets} $$
  $$ \eta(x)=(x,i) \quad\quad \mu((x,L),L')=(x,L\ast L') $$
  $$ m((x,L),(x',L'))= ((x,x'),L\parallel L') $$
\end{example}

Importing the construction inspired from the centre in such a duoidal framework may result in some more refined results. But it is still unclear (to us) how the notion of central cone can be remodeled to fit the monoidality diagram and the notion of lax commutativity.

\section*{Conclusion and Future Work}
\label{sec:conclude}
In this work, we showed how to construct the centre of graded monads
and how to formulate central graded submonads, 
where the gradation is given by pomonoids and where the monads are defined over symmetric monoidal categories. 
We also introduce the lax commutativity on graded monads by choosing duoids as gradations, 
to make use of different pomonoid structures.

As part of future work, it will be interesting to see if there are other ways to take the order of a pomonoid into account when constructing the centre and central cones. Furthermore, we have not provided any suitable universal conditions (similar to the ones in \cite{central-submonads}) that characterise the centre of a graded monad, so this is another open problem.

\section*{Acknowledgments}
We thank Louis Lemonnier, Dylan McDermott and Tarmo Uustalu for discussions related to the
paper.  QL gratefully acknowledges financial support from LIPN which funded his
internship during which a lot of the work was done.

\newpage

%
%
%
\bibliographystyle{splncs04}
%

\bibliography{refs}

@article{commutants,
	author    = {Rory B. B. Lucyshyn{-}Wright},
	title     = {Commutants for Enriched Algebraic Theories and Monads},
	journal   = {Appl. Categorical Struct.},
	volume    = {26},
	number    = {3},
	pages     = {559--596},
	year      = {2018},
	url_commented       = {https://doi.org/10.1007/s10485-017-9503-1},
	doi       = {10.1007/s10485-017-9503-1},
	biburl_commented    = {https://dblp.org/rec/journals/acs/Lucyshyn-Wright18a.bib},
	bibsource = {dblp computer science bibliography, https://dblp.org}
}

@article{premonoidal,
  title={Premonoidal Categories and Notions of Computation},
  author={John Power and Edmund P. Robinson},
  journal={Math. Struct. Comput. Sci.},
  year={1997},
  volume={7},
  pages={453-468}
}

@article{commutativity,
title = {Commutativity},
journal = {Journal of Pure and Applied Algebra},
volume = {220},
number = {5},
pages = {1707-1751},
year = {2016},
issn = {0022-4049},
doi = {10.1016/j.jpaa.2015.09.003},
url_commented = {https://www.sciencedirect.com/science/article/pii/S0022404915002510},
author = {Richard Garner and Ignacio {López Franco}}
}

@inproceedings{central-submonads,
  author       = {Titouan Carette and
                  Louis Lemonnier and
                  Vladimir Zamdzhiev},
  title        = {Central Submonads and Notions of Computation: Soundness, Completeness
                  and Internal Languages},
  booktitle    = {{LICS}},
  pages        = {1--13},
  year         = {2023},
  url_commented          = {https://doi.org/10.1109/LICS56636.2023.10175687},
  doi          = {10.1109/LICS56636.2023.10175687},
  biburl_commented       = {https://dblp.org/rec/conf/lics/CaretteLZ23.bib},
  bibsource    = {dblp computer science bibliography, https://dblp.org}
}

@inproceedings{FujiiKM16,
  author       = {Soichiro Fujii and
                  Shin{-}ya Katsumata and
                  Paul{-}Andr{\'{e}} Melli{\`{e}}s},
  editor       = {Bart Jacobs and
                  Christof L{\"{o}}ding},
  title        = {Towards a Formal Theory of Graded Monads},
  booktitle    = {Foundations of Software Science and Computation Structures - 19th
                  International Conference, {FOSSACS} 2016, Held as Part of the European
                  Joint Conferences on Theory and Practice of Software, {ETAPS} 2016,
                  Eindhoven, The Netherlands, April 2-8, 2016, Proceedings},
  series       = {Lecture Notes in Computer Science},
  volume       = {9634},
  pages        = {513--530},
  publisher    = {Springer},
  year         = {2016},
  url_commented          = {https://doi.org/10.1007/978-3-662-49630-5\_30},
  doi          = {10.1007/978-3-662-49630-5\_30},
  biburl_commented       = {https://dblp.org/rec/conf/fossacs/FujiiKM16.bib},
  bibsource    = {dblp computer science bibliography, https://dblp.org}
}

@inproceedings{IMO20,
  author       = {Andrej Ivaskovic and
                  Alan Mycroft and
                  Dominic Orchard},
  editor       = {Zena M. Ariola},
  title        = {Data-Flow Analyses as Effects and Graded Monads},
  booktitle    = {5th International Conference on Formal Structures for Computation
                  and Deduction, {FSCD} 2020, June 29-July 6, 2020, Paris, France (Virtual
                  Conference)},
  series       = {LIPIcs},
  volume       = {167},
  pages        = {15:1--15:23},
  publisher    = {Schloss Dagstuhl - Leibniz-Zentrum f{\"{u}}r Informatik},
  year         = {2020},
  url_commented          = {https://doi.org/10.4230/LIPIcs.FSCD.2020.15},
  doi          = {10.4230/LIPICS.FSCD.2020.15},
  biburl_commented       = {https://dblp.org/rec/conf/fscd/IvaskovicMO20.bib},
  bibsource    = {dblp computer science bibliography, https://dblp.org}
}

@inproceedings{OWE20,
  author       = {Dominic Orchard and
                  Philip Wadler and
                  Harley {Eades III}},
  editor       = {Max S. New and
                  Sam Lindley},
  title        = {Unifying graded and parameterised monads},
  booktitle    = {Proceedings Eighth Workshop on Mathematically Structured Functional
                  Programming, MSFP@ETAPS 2020, Dublin, Ireland, 25th April 2020},
  series       = {{EPTCS}},
  volume       = {317},
  pages        = {18--38},
  year         = {2020},
  url_commented          = {https://doi.org/10.4204/EPTCS.317.2},
  doi          = {10.4204/EPTCS.317.2},
  biburl_commented       = {https://dblp.org/rec/journals/corr/abs-2001-10274.bib},
  bibsource    = {dblp computer science bibliography, https://dblp.org}
}

@inproceedings{canonical,
  author       = {Flavien Breuvart and
                  Dylan McDermott and
                  Tarmo Uustalu},
  editor       = {Jade Master and
                  Martha Lewis},
  title        = {Canonical Gradings of Monads},
  booktitle    = {Proceedings Fifth International Conference on Applied Category Theory,
                  {ACT} 2022, Glasgow, United Kingdom, 18-22 July 2022},
  series       = {{EPTCS}},
  volume       = {380},
  pages        = {1--21},
  year         = {2022},
  url_commented          = {https://doi.org/10.4204/EPTCS.380.1},
  doi          = {10.4204/EPTCS.380.1},
  biburl_commented       = {https://dblp.org/rec/journals/corr/abs-2307-16558.bib},
  bibsource    = {dblp computer science bibliography, https://dblp.org}
}

@unpublished{commu-graded-22,
	title={Commutative graded monads},
	author={Rowan Poklewski-Koziell},
  NOTE = {working paper or preprint},
  url_commented={https://arxiv.org/abs/2204.01634},
	year={2022}
}

@article{perf_pomo,
author = {Gould, Victoria and Shaheen, Lubna},
year = {2010},
month = {08},
pages = {102-127},
title = {Perfection for pomonoids},
volume = {81},
journal = {Semigroup Forum},
doi = {10.1007/s00233-010-9237-y}
}

\appendix

\section{Coherence Properties of costrength}
\label{sec:app}

\begin{proposition}[Coherence Properties of costrength]\label{prop:costrength-graded}
	For all elements $\textcolor{red}{z}, \textcolor{teal}{b}$ in $G$, $X,Y$ in $\CC$, 
	the following diagrams commute:
	\[\stikz[0.83]{../figures/costrength-graded.tikz}\]
\end{proposition}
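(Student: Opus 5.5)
We cannot see the diagrams in the figure file, but the coherence properties of costrength are presumably the mirror images of the strength axioms of Definition~\ref{def:strong-monad-graded}. These are: compatibility with the right unitor $\rho$; compatibility with the associator $\alpha$; compatibility with the unit, $\tau'\circ(\eta\otimes Y)=\eta$; compatibility with multiplication, $\tau'\circ(\mu^{\textcolor{red}{z},\textcolor{teal}{b}}\otimes Y)=\mu^{\textcolor{red}{z},\textcolor{teal}{b}}\circ\overset{\textcolor{red}{z}}{\TT}\tau'\circ\tau'$; and naturality with respect to the order, $\tau'\circ(\overset{\textcolor{red}{z}\leq\textcolor{teal}{b}}{\TT}\otimes Y)=\overset{\textcolor{red}{z}\leq\textcolor{teal}{b}}{\TT}\circ\tau'$. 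Plausibly there are also the mixed diagrams relating $\tau$ and $\tau'$, namely $\alpha$-compatibility of $\tau'\circ(\tau\otimes V)$ against $\overset{\textcolor{red}{z}}{\TT}\alpha\circ\tau\circ(X\otimes\tau')\circ\alpha$. The approach is uniform: unfold $\tau'=\overset{\textcolor{red}{z}}{\TT}(\gamma)\circ\tau\circ\gamma$, use naturality of $\gamma$ to move the symmetries to the outer boundary, then apply the corresponding strength axiom, and finally cancel the symmetries using $\gamma\circ\gamma=\id$ together with the symmetric monoidal coherences (hexagon, $\lambda\circ\gamma=\rho$).

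The key steps, in order, are as follows.

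First, for the unitor: $\overset{\textcolor{red}{z}}{\TT}\rho\circ\tau'_{X,I}=\overset{\textcolor{red}{z}}{\TT}(\rho\gamma)\circ\tau_{I,X}\circ\gamma=\overset{\textcolor{red}{z}}{\TT}\lambda\circ\tau_{I,X}\circ\gamma=\lambda\circ\gamma=\rho$.

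Second, for the unit $\eta$ and the order maps: write $\tau'\circ(f\otimes Y)=\TT\gamma\circ\tau\circ(Y\otimes f)\circ\gamma$ by naturality of $\gamma$. Apply the strength axiom for $\eta$ (respectively naturality of $\tau$ in the grade), then naturality of $\eta$ (respectively of $\overset{\textcolor{red}{z}\leq\textcolor{teal}{b}}{\TT}$) in the object to pull $\TT\gamma$ past, and conclude with $\gamma\gamma=\id$.

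Third, for multiplication: after unfolding, rewrite $\tau\circ(Y\otimes\mu)$ by the strength axiom as $\mu\circ\overset{\textcolor{red}{z}}{\TT}\tau\circ\tau$. Then insert $\gamma\gamma=\id$ between consecutive strengths and use naturality of $\tau$ and of $\mu$ to regroup the result into $\mu\circ\overset{\textcolor{red}{z}}{\TT}\tau'\circ\tau'$.

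The step I expect to be the main obstacle is the associativity diagrams, both the pure $\tau'$ one and the mixed $\tau/\tau'$ one. There, the symmetries interact with $\alpha$, and one must use the hexagon identity to rewrite composites such as $\gamma_{X\otimes Y,V}$ as $\alpha\circ(\gamma\otimes Y)\circ\alpha^{-1}\circ(X\otimes\gamma)\circ\alpha$. I would first establish the pure costrength associativity by reducing it, via one hexagon and naturality of $\gamma$, to the strength associativity axiom applied to $V\otimes(Y\otimes\overset{\textcolor{red}{z}}{\TT}X)$. For the mixed diagram I would use naturality of $\tau$ in its first argument so that $\tau$ and $\tau'$ act on disjoint tensor factors, then apply the associativity axioms of both $\tau$ and $\tau'$ once each. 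If needed, I would invoke Mac Lane's coherence theorem for symmetric monoidal categories to identify the remaining composites of $\alpha$ and $\gamma$ on both sides without computing them explicitly.

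Throughout, the grades play no active role, since every $\gamma$ lives in $\CC$ and all grade annotations are carried along unchanged. The proofs are therefore identical to the ungraded ones in \cite{central-submonads}, decorated with superscripts.
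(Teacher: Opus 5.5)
Your proposal is correct and follows essentially the same route as the paper: unfold $\tau'$ as $\tau$ conjugated by the symmetry $\gamma$, move the symmetries outward by naturality of $\gamma$, apply the corresponding strength axiom, and then refold using naturality of $\eta$ (resp.\ $\mu$) together with $\gamma\circ\gamma=\mathrm{id}$. The paper only writes out the $\eta$ and $\mu$ diagrams, since those are the only ones used later, and dismisses the rest as similar, so your sketches of the unitor and associativity cases go beyond what it actually proves.
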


\begin{proof}
	Only $\eta$ and $\mu$ part of this proposition is going to be used in the proofs later, 
	hence we only give proofs on those two parts, the rest could be proved similarly. 

	The proof of $\eta$ :
	\[\stikz[0.83]{../figures/costrength-graded-eta.tikz}\]
	(1) $\gamma$ is natural; (2) $\gamma_{X,Y} ; \gamma_{Y,X} = id$; (3) definition of $\tau$; 
	(4) definition of $\tau'$; and (5) $\eta$ and $\gamma$ are natural.

	$\mu$ :
	\[\stikz[0.80]{../figures/costrength-graded-mu.tikz}\]
	(1) fact that $\overset{\textcolor{red}{a}}{\TT} \gamma_{ \TT X ,  Y} = 
	\overset{\textcolor{red}{a}}{\TT} \gamma_{ Y, \TT X }^{-1}$, and definition of $\tau'$;
	(2) $\overset{\textcolor{red}{a}}{\TT}$ is a functor and definition of $\tau'$; 
	(3) $\gamma$ is natural; (4) definition of strength; 
	(5) $\mu_{X\otimes Y}^{\textcolor{red}{a} , \textcolor{teal}{b}}$ and $\gamma$ are natural and 
	(6) definition of $\tau'$.
\end{proof}

\section{Lemmas for proof of Theorem \ref{th:submonad-from-cone-graded}}
\label{app:proofs}

\begin{lemma}\label{lem:precompose-central-graded}
	If $(X,f:X\to \overset{ \textcolor{red}{z}}{\TT} Y)$ is a graded central cone of $\TT$ at $(\textcolor{red}{z},Y)$.
	Then for any $g:Z\to X$ in $\CC$, it follows that $(Z,f\circ g)$ is a graded central cone of $\TT$ at 
	$(\textcolor{red}{z},Y)$.
\end{lemma}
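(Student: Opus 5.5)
The plan is to verify the central cone square for $(Z, f\circ g)$ directly, by factoring it through the square that holds for $(X,f)$. Fix $\textcolor{teal}{b}\in G$ and an object $W$ of $\CC$; I write $W$ for the auxiliary object to avoid the clash with the name $Y$ in the statement. The two paths of the required diagram, starting from $Z\otimes\overset{\textcolor{teal}{b}}{\TT} W$, both begin with $(f\circ g)\otimes\overset{\textcolor{teal}{b}}{\TT} W$.

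The first step is to rewrite this morphism using bifunctoriality of $\otimes$:
\[
(f\circ g)\otimes \mathrm{id}_{\overset{\textcolor{teal}{b}}{\TT} W} \;=\; \bigl(f\otimes \mathrm{id}_{\overset{\textcolor{teal}{b}}{\TT} W}\bigr)\circ\bigl(g\otimes \mathrm{id}_{\overset{\textcolor{teal}{b}}{\TT} W}\bigr).
\]
After this rewriting, each of the two composites (the upper one through $\tau'$, $\overset{\textcolor{red}{z}}{\TT}\tau$ and $\mu^{\textcolor{red}{z},\textcolor{teal}{b}}$, and the lower one through $\tau$, $\overset{\textcolor{teal}{b}}{\TT}\tau'$ and $\mu^{\textcolor{teal}{b},\textcolor{red}{z}}$) has the form $h\circ (g\otimes\mathrm{id})$. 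Here $h$ is precisely the corresponding path of the central cone diagram for $(X,f)$ at $(\textcolor{red}{z},Y)$ with the same $\textcolor{teal}{b}$ and $W$.

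The second step uses the hypothesis that $(X,f)$ is a graded central cone. This gives equality of the two $h$'s, namely $h_{\mathrm{up}} = h_{\mathrm{down}}$ as morphisms $X\otimes\overset{\textcolor{teal}{b}}{\TT} W\to \overset{\textcolor{red}{z}\ast\textcolor{teal}{b}}{\TT}(Y\otimes W)$. The identification $\textcolor{red}{z}\ast\textcolor{teal}{b}=\textcolor{teal}{b}\ast\textcolor{red}{z}$ is exactly the one used in the definition, since $\textcolor{red}{z}\in Z(G)$. Precomposing both sides with $g\otimes\mathrm{id}$ yields the required commutativity for $(Z,f\circ g)$. Since $\textcolor{teal}{b}$ and $W$ were arbitrary, this concludes the argument.

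There is no real obstacle here. The only point requiring care is that no naturality of $\tau$ or $\tau'$ in the first variable is needed, because the precomposition happens before any structure map is applied. Everything therefore reduces to functoriality of $-\otimes\overset{\textcolor{teal}{b}}{\TT} W$ and associativity of composition.
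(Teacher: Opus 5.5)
Your proposal is correct and follows essentially the same approach as the paper. The paper also precomposes the central cone square for $f$ with $g\otimes\mathrm{id}$, and your explicit use of bifunctoriality of $\otimes$ spells out why that suffices.
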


\begin{proof} 
	This is obtained by precomposing the definition of graded central cone by $g\otimes \id$. 
	For all $\textcolor{teal}{b}\in G$ and $X$ in $\CC$,
  \[\scalebox{0.8}{\begin{tikzpicture}
	\begin{pgfonlayer}{nodelayer}
		\node [style=empty] (0) at (-6, 3) {$X\otimes \overset{\textcolor{teal}{b}}{\TT}  X$};
		\node [style=empty] (1) at (4, 3) {$\overset{\textcolor{red}{z}}{\TT}Y\otimes \overset{\textcolor{teal}{b}}{\TT}  X$};
		\node [style=empty] (2) at (14, 3) {$\overset{\textcolor{red}{z}}{\TT}(Y\otimes \overset{\textcolor{teal}{b}}{\TT} X)$};
		\node [style=empty] (3) at (14, 0) {$\overset{\textcolor{red}{z}}{\TT}  \overset{\textcolor{teal}{b}}{\TT} (Y\otimes X)$};
		\node [style=empty] (4) at (7.5, -3) {$\overset{\textcolor{teal}{b} \ast \textcolor{red}{z}}{\TT} (Y\otimes X)$};
		\node [style=empty] (5) at (-6, 0) {$\overset{\textcolor{red}{z}}{\TT}Y\otimes\overset{\textcolor{teal}{b}}{\TT} X$};
		\node [style=empty] (6) at (-6, -3) {$ \overset{\textcolor{teal}{b}}{\TT} (\overset{\textcolor{red}{z}}{\TT} Y\otimes X)$};
		\node [style=empty] (7) at (0.5, -3) {$ \overset{\textcolor{teal}{b}}{\TT}  \cdot \overset{\textcolor{red}{z}}{\TT} (Y\otimes X)$};
		\node [style=empty] (8) at (-6, 2.5) {};
		\node [style=empty] (9) at (-6, 0.5) {};
		\node [style=empty] (10) at (-6, -0.5) {};
		\node [style=empty] (11) at (-6, -2.5) {};
		\node [style=empty] (12) at (14, 2.5) {};
		\node [style=empty] (13) at (14, 0.5) {};
		\node [style=empty] (14) at (14, -0.5) {};
		\node [style=empty] (15) at (14, -2.5) {};
		\node [style=empty] (16) at (-1, 3.75) {$f\otimes \overset{\textcolor{teal}{b}}{\TT}  X$};
		\node [style=empty] (17) at (9, 3.75) {$\tau'$};
		\node [style=empty] (18) at (15.5, 1.5) {$\overset{\textcolor{red}{z}}{\TT}\tau_{Y,X}$};
		\node [style=empty] (19) at (15.5, -1.5) {$\mu_{Y\otimes X}^{\textcolor{red}{z} ,  \textcolor{teal}{b}}$};
		\node [style=empty] (20) at (-7.5, 1.5) {$f\otimes\overset{\textcolor{teal}{b}}{\TT} X$};
		\node [style=empty] (21) at (-7.5, -1.5) {$\tau$};
		\node [style=empty] (22) at (-2.75, -4) {$ \overset{\textcolor{teal}{b}}{\TT} \tau'_{Y,X}$};
		\node [style=empty] (23) at (4, -4) {$\mu_{Y\otimes X}^{\textcolor{teal}{b} , \textcolor{red}{z}}$};
		\node [style=empty] (24) at (-12.25, 3) {$\overset{\textcolor{red}{z}}{Z}\otimes \overset{\textcolor{teal}{b}}{\TT} X$};
		\node [style=empty] (27) at (-9, 3.75) {$g\otimes \overset{\textcolor{teal}{b}}{\TT}  X$};
		\node [style=empty] (28) at (14, -3) {$\overset{\textcolor{red}{z} \ast \textcolor{teal}{b}}{\TT} (Y\otimes X)$};
	\end{pgfonlayer}
	\begin{pgfonlayer}{edgelayer}
		\draw [style=to] (0) to (1);
		\draw [style=to] (1) to (2);
		\draw [style=to] (6) to (7);
		\draw [style=to] (7) to (4);
		\draw [style=to] (8) to (9);
		\draw [style=to] (10) to (11);
		\draw [style=to] (14) to (15);
		\draw [style=to] (12) to (13);
		\draw [style=to] (24) to (0);
		\draw [style=equal-arrow, in=180, out=0] (4) to (28);
	\end{pgfonlayer}
\end{tikzpicture}} \]
  commutes directly from the definition of graded central cone for $f$.
\end{proof}

\begin{lemma}\label{lem:postcompose-central-graded}
	If $(X,f:X\to \overset{ \textcolor{red}{z}}{\TT} Y)$ is a graded central cone of $\TT$ at $(\textcolor{red}{z},Y)$.
	Then for any $g:Y\to Z$ in $\CC$ at $\textcolor{red}{z}$, 
	it follows that $(X,\overset{ \textcolor{red}{z}}{\TT} g\circ f)$ is a graded central cone of $\TT$ at $Z$ at $\textcolor{red}{z}$.
\end{lemma}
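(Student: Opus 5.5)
We must show that for every $\textcolor{teal}{b}\in G$ and every object $W$, the two composites $Z_0\otimes\overset{\textcolor{teal}{b}}{\TT}W\to\overset{\textcolor{red}{z}\ast\textcolor{teal}{b}}{\TT}(Z\otimes W)$ starting from $(\overset{\textcolor{red}{z}}{\TT}g\circ f)\otimes\id$ agree. Here $Z_0$ denotes the apex $X$ of the given cone, renamed to avoid clashing with the object $X$ used for $\overset{\textcolor{teal}{b}}{\TT}X$. The plan is to push $\overset{\textcolor{red}{z}}{\TT}g$ through both legs of the diagram by naturality until it becomes a single map $\overset{\textcolor{red}{z}\ast\textcolor{teal}{b}}{\TT}(g\otimes\id)$ applied at the end. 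Then the two legs reduce to the central-cone square for $f$, postcomposed with that map.

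The upper leg is
\[
\mu^{\textcolor{red}{z},\textcolor{teal}{b}}\circ\overset{\textcolor{red}{z}}{\TT}\tau\circ\tau'\circ\bigl(\overset{\textcolor{red}{z}}{\TT}g\otimes\id\bigr).
\]
By naturality of the costrength $\tau'$ in its first argument, we have $\tau'\circ(\overset{\textcolor{red}{z}}{\TT}g\otimes\id)=\overset{\textcolor{red}{z}}{\TT}(g\otimes\id)\circ\tau'$. Naturality of $\tau$ in its first argument, applied under $\overset{\textcolor{red}{z}}{\TT}$ using functoriality, moves this to $\overset{\textcolor{red}{z}}{\TT}\overset{\textcolor{teal}{b}}{\TT}(g\otimes\id)$. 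Naturality of $\mu^{\textcolor{red}{z},\textcolor{teal}{b}}$ then turns it into $\overset{\textcolor{red}{z}\ast\textcolor{teal}{b}}{\TT}(g\otimes\id)$.

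The lower leg is handled symmetrically. First, naturality of $\tau$ in its first argument gives $\tau\circ(\overset{\textcolor{red}{z}}{\TT}g\otimes\id)=\overset{\textcolor{teal}{b}}{\TT}(\overset{\textcolor{red}{z}}{\TT}g\otimes\id)\circ\tau$. Next, naturality of $\tau'$ under $\overset{\textcolor{teal}{b}}{\TT}$ yields $\overset{\textcolor{teal}{b}}{\TT}\overset{\textcolor{red}{z}}{\TT}(g\otimes\id)$. Finally, naturality of $\mu^{\textcolor{teal}{b},\textcolor{red}{z}}$ yields $\overset{\textcolor{teal}{b}\ast\textcolor{red}{z}}{\TT}(g\otimes\id)$.

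Since $\textcolor{red}{z}$ is central, $\textcolor{teal}{b}\ast\textcolor{red}{z}=\textcolor{red}{z}\ast\textcolor{teal}{b}$, so these are literally the same functor. Both legs are therefore $\overset{\textcolor{red}{z}\ast\textcolor{teal}{b}}{\TT}(g\otimes\id)$ composed with the two legs of the central-cone diagram for $f$ at $(\textcolor{red}{z},Y)$. Those legs agree by hypothesis.

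The only point needing care is the naturality of the costrength $\tau'$ in its first variable, since $\tau'$ is defined from $\tau$ and $\gamma$. This naturality follows from naturality of $\gamma$ and of $\tau$ in its second argument, together with functoriality of $\overset{\textcolor{red}{z}}{\TT}$. It can be taken as part of the coherence properties in Proposition~\ref{prop:costrength-graded}, or checked in one line. I expect no real obstacle; the argument is a single naturality diagram chase, analogous to Lemma~\ref{lem:precompose-central-graded}, with $g$ entering on the codomain side rather than the domain side.
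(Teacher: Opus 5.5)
Your proof is correct and is essentially the paper's argument. The paper's diagram also pushes $\overset{\textcolor{red}{z}}{\TT}g$ through the upper leg by naturality of $\tau'$, $\tau$ and $\mu^{\textcolor{red}{z},\textcolor{teal}{b}}$, and through the lower leg by naturality of $\tau$, $\tau'$ and $\mu^{\textcolor{teal}{b},\textcolor{red}{z}}$; it then uses $\textcolor{teal}{b}\ast\textcolor{red}{z}=\textcolor{red}{z}\ast\textcolor{teal}{b}$ to identify the two resulting maps $\overset{\textcolor{red}{z}\ast\textcolor{teal}{b}}{\TT}(g\otimes\mathrm{id})$ and concludes from the central-cone square for $f$.
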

\begin{proof}
	The naturality of $\tau$ and $\mu$ allow us to push the application of $g$ to the
	last postcomposition, in order to use the central property of $f$. 
	In more details, for all $\textcolor{teal}{b}\in G$ and $X$ in $\CC$,
  	the following diagram:
	\[\scalebox{0.8}{\begin{tikzpicture}
	\begin{pgfonlayer}{nodelayer}
		\node [style=empty] (0) at (-13, 13) {$X\otimes \overset{\textcolor{teal}{b}}{\TT} X$};
		\node [style=empty] (1) at (15, 13) {$\overset{\textcolor{red}{z}}{\TT} Y\otimes\overset{\textcolor{teal}{b}}{\TT} X$};
		\node [style=empty] (2) at (24, 13) {$\overset{\textcolor{red}{z}}{\TT} Z\otimes\overset{\textcolor{teal}{b}}{\TT} X$};
		\node [style=empty] (3) at (24, 3) {$\overset{\textcolor{red}{z}}{\TT}  \overset{\textcolor{teal}{b}}{\TT} (Z\otimes X)$};
		\node [style=empty] (4) at (24, -3.25) {$\overset{\textcolor{red}{z} \ast \textcolor{teal}{b} }{\TT}  (Z\otimes X)$};
		\node [style=empty] (5) at (-13, -3.25) {$\overset{\textcolor{red}{z}}{\TT} Z\otimes \overset{\textcolor{teal}{b}}{\TT}  X$};
		\node [style=empty] (6) at (-6, -3.25) {$\overset{\textcolor{teal}{b}}{\TT}(\overset{\textcolor{red}{z}}{\TT}Z\otimes X)$};
		\node [style=empty] (7) at (1, -3.25) {$\overset{\textcolor{teal}{b}}{\TT}  \overset{\textcolor{red}{z}}{\TT}  (Z\otimes X)$};
		\node [style=empty] (8) at (-13, 12.5) {};
		\node [style=empty] (9) at (-13, 0.75) {};
		\node [style=empty] (10) at (-13, -0.25) {};
		\node [style=empty] (11) at (-13, -2.75) {};
		\node [style=empty] (12) at (24, 7.5) {};
		\node [style=empty] (13) at (24, 3.5) {};
		\node [style=empty] (14) at (24, 2.5) {};
		\node [style=empty] (15) at (24, -2.75) {};
		\node [style=empty] (16) at (1, 13.75) {$f\otimes\overset{\textcolor{teal}{b}}{\TT} X$};
		\node [style=empty] (17) at (25.25, 10.75) {$\tau'$};
		\node [style=empty] (18) at (25.25, 5.5) {$\overset{\textcolor{red}{z}}{\TT}\tau_{Z,X}$};
		\node [style=empty] (19) at (25.25, 0) {$\mu_{Z\otimes X}^{\textcolor{red}{z}, \textcolor{teal}{b}}$};
		\node [style=empty] (20) at (-14.5, 7) {$f\otimes\overset{\textcolor{teal}{b}}{\TT} X$};
		\node [style=empty] (21) at (-9.5, -4) {$\tau$};
		\node [style=empty] (22) at (-2.25, -4) {$\overset{\textcolor{teal}{b}}{\TT}\tau'_{Z,X}$};
		\node [style=empty] (23) at (5.5, -4) {$\mu_{Z\otimes X}^{\textcolor{teal}{b}, \textcolor{red}{z}}$};
		\node [style=empty] (24) at (24, 12.5) {};
		\node [style=empty] (25) at (24, 8.5) {};
		\node [style=empty] (26) at (24, 8) {$\overset{\textcolor{red}{z}}{\TT} (Z\otimes\overset{\textcolor{teal}{b}}{\TT} X)$};
		\node [style=empty] (27) at (-13, 0.25) {$\overset{\textcolor{red}{z}}{\TT} Y\otimes \overset{\textcolor{teal}{b}}{\TT} X$};
		\node [style=empty] (28) at (-14.75, -1.5) {$\overset{\textcolor{red}{z}}{\TT} g\otimes \overset{\textcolor{teal}{b}}{\TT}  X$};
		\node [style=empty] (29) at (19.5, 13.75) {$\overset{\textcolor{red}{z}}{\TT} g\otimes\overset{\textcolor{teal}{b}}{\TT} X$};
		\node [style=empty] (30) at (15, 8) {$\overset{\textcolor{red}{z}}{\TT} (Y\otimes\overset{\textcolor{teal}{b}}{\TT} X)$};
		\node [style=empty] (31) at (15, 3) {$\overset{\textcolor{red}{z}}{\TT}  \overset{\textcolor{teal}{b}}{\TT} (Y\otimes X)$};
		\node [style=empty] (32) at (15, 12.5) {};
		\node [style=empty] (33) at (15, 8.5) {};
		\node [style=empty] (34) at (15, 7.5) {};
		\node [style=empty] (35) at (15, 3.5) {};
		\node [style=empty] (36) at (15, 0.25) {$\overset{\textcolor{red}{z} \ast \textcolor{teal}{b} }{\TT}  (Y\otimes X)$};
		\node [style=empty] (37) at (16, 0) {};
		\node [style=empty] (38) at (23, -2.75) {};
		\node [style=empty] (39) at (15, 2.5) {};
		\node [style=empty] (40) at (15, 0.75) {};
		\node [style=empty] (41) at (13.75, 10.75) {$\tau'$};
		\node [style=empty] (42) at (13.75, 5.5) {$\overset{\textcolor{red}{z}}{\TT}\tau_{Y,X}$};
		\node [style=empty] (43) at (13.75, 1.75) {$\mu_{Y\otimes X}^{\textcolor{red}{z}, \textcolor{teal}{b}}$};
		\node [style=empty] (44) at (19.5, 9) {$\overset{\textcolor{red}{z}}{\TT}(g\otimes \overset{\textcolor{teal}{b}}{\TT}  X)$};
		\node [style=empty] (45) at (19.5, 3.75) {$\overset{\textcolor{red}{z}}{\TT}  \overset{\textcolor{teal}{b}}{\TT} ( g\otimes X)$};
		\node [style=empty] (46) at (20, -0.5) {$\overset{\textcolor{red}{z} \ast \textcolor{teal}{b} }{\TT}  (g\otimes X)$};
		\node [style=empty] (47) at (-6, 0.25) {$\overset{\textcolor{teal}{b}}{\TT}(\overset{\textcolor{red}{z}}{\TT}Y\otimes X)$};
		\node [style=empty] (48) at (1, 0.25) {$\overset{\textcolor{teal}{b}}{\TT}  \overset{\textcolor{red}{z}}{\TT} (Y\otimes X)$};
		\node [style=empty] (49) at (1, -0.25) {};
		\node [style=empty] (50) at (1, -2.75) {};
		\node [style=empty] (51) at (-6, -0.25) {};
		\node [style=empty] (52) at (-6, -2.75) {};
		\node [style=empty] (53) at (-9.5, 1) {$\tau$};
		\node [style=empty] (54) at (-2.5, 1) {$\overset{\textcolor{teal}{b}}{\TT}\tau'_{Y,X}$};
		\node [style=empty] (55) at (4.5, 1) {$\mu_{Y\otimes X}^{\textcolor{teal}{b},  \textcolor{red}{z}}$};
		\node [style=empty] (56) at (-8, -1.5) {$\overset{\textcolor{teal}{b}}{\TT} (\overset{\textcolor{red}{z}}{\TT}g\otimes X)$};
		\node [style=empty] (57) at (3.5, -1.5) {$\overset{\textcolor{teal}{b}}{\TT}  \overset{\textcolor{red}{z}}{\TT} ( g\otimes X)$};
		\node [style=none] (58) at (1, 7) {\textcolor{blue}{(1)}};
		\node [style=none] (59) at (19.5, 10.5) {\textcolor{blue}{(2)}};
		\node [style=none] (60) at (19.5, 5.75) {\textcolor{blue}{(3)}};
		\node [style=none] (61) at (19.5, 1.5) {\textcolor{blue}{(4)}};
		\node [style=none] (62) at (-10.5, -1.5) {\textcolor{blue}{(5)}};
		\node [style=none] (63) at (-2.5, -1.5) {\textcolor{blue}{(6)}};
		\node [style=none] (64) at (6.5, -1.75) {\textcolor{blue}{(7)}};
		\node [style=empty] (65) at (8, 0.25) {$\overset{\textcolor{teal}{b} \ast \textcolor{red}{z} }{\TT}  (Y\otimes X)$};
		\node [style=empty] (66) at (12, -3.25) {$\overset{\textcolor{teal}{b} \ast \textcolor{red}{z} }{\TT}  (Z\otimes X)$};
		\node [style=empty] (67) at (8, -0.25) {};
		\node [style=empty] (68) at (12, -2.75) {};
		\node [style=empty] (69) at (12.25, -1.25) {$\overset{\textcolor{teal}{b} \ast  \textcolor{red}{z} }{\TT}  (g\otimes X)$};
		\node [style=none] (70) at (15, -1.75) {\textcolor{blue}{(8)}};
	\end{pgfonlayer}
	\begin{pgfonlayer}{edgelayer}
		\draw [style=to] (0) to (1);
		\draw [style=to] (1) to (2);
		\draw [style=to] (6) to (7);
		\draw [style=to] (8) to (9);
		\draw [style=to] (10) to (11);
		\draw [style=to] (14) to (15);
		\draw [style=to] (12) to (13);
		\draw (5) to (6);
		\draw [style=to] (24) to (25);
		\draw [style=to] (30) to (26);
		\draw [style=to] (32) to (33);
		\draw [style=to] (34) to (35);
		\draw [style=to] (31) to (3);
		\draw [style=to] (37) to (38);
		\draw [style=to] (39) to (40);
		\draw [style=to] (27) to (47);
		\draw [style=to] (47) to (48);
		\draw [style=to] (51) to (52);
		\draw [style=to] (49) to (50);
		\draw [style=equal-arrow] (65) to (36);
		\draw [style=equal-arrow] (66) to (4);
		\draw [style=to] (48) to (65);
		\draw [style=to] (67) to (68);
		\draw [style=to] (7) to (66);
	\end{pgfonlayer}
\end{tikzpicture}}\]
  	commutes, because: (1) $f$ is a graded central cone, (2) $\tau'$ is natural, (3) $\tau$ is natural, (4) $\mu$ is natural
  	(5) $\tau$ is natural, (6) $\tau'$ is natural, (7) $\mu$ is natural, (8) $\TT$ is a functor.
\end{proof}

\begin{lemma}\label{lem:monic}
  If $(Z,\iota)$ is a terminal (graded) central cone of $\TT$ at $X$, then $\iota$ is a monomorphism.
\end{lemma}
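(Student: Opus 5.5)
To prove that $\iota$ is a monomorphism, I will use the standard argument for terminal objects in a category of cones, with Lemma~\ref{lem:precompose-central-graded} doing the only real work. Suppose $(Z,\iota)$ is a terminal graded central cone of $\TT$ at $(\textcolor{red}{z},X)$. Take two parallel morphisms $g,h \colon W \to Z$ with $\iota \circ g = \iota \circ h$. The aim is to show $g = h$.

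\textbf{Step 1: $(W, \iota\circ g)$ is a graded central cone.} Lemma~\ref{lem:precompose-central-graded} says that precomposing a graded central cone with any morphism yields a graded central cone. Applied to $g \colon W \to Z$, it shows that $(W,\iota \circ g)$ is a graded central cone of $\TT$ at $(\textcolor{red}{z},X)$.

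\textbf{Step 2: $g$ and $h$ are both cone morphisms out of this cone.} The morphism $g \colon (W,\iota\circ g) \to (Z,\iota)$ is a morphism of graded central cones, since $\iota \circ g = \iota\circ g$ holds trivially. The morphism $h$ is also a morphism of graded central cones $(W,\iota\circ g)\to(Z,\iota)$, because $\iota\circ h = \iota\circ g$ by hypothesis.

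\textbf{Step 3: terminality.} Since $(Z,\iota)$ is terminal in the category of graded central cones at $(\textcolor{red}{z},X)$, there is exactly one cone morphism from $(W,\iota\circ g)$ into it. Hence $g = h$, and $\iota$ is monic.

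There is no genuine obstacle, because the only nontrivial input is the closure of central cones under precomposition, which Lemma~\ref{lem:precompose-central-graded} already provides. The one point to check is that this precomposition lemma is applied at the same grade $\textcolor{red}{z}$ and the same object $X$, so that both cones live in the same category of cones. This holds by construction.

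For completeness, the same terminality argument also gives the uniqueness part of Proposition~\ref{prop:unique-graded}. Two terminal cones admit unique cone morphisms each way, and both composites must equal the identities, again by uniqueness of cone morphisms into a terminal cone.
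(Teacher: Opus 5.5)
Your proof is correct and is the same argument as the paper's. Lemma~\ref{lem:precompose-central-graded} makes $(W,\iota\circ g)$ a graded central cone, both $g$ and $h$ are then cone morphisms into the terminal cone $(Z,\iota)$, and terminality forces $g=h$.
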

\begin{proof}
  Let us consider $f,g:Y\to Z$ such that $\iota\circ f=\iota\circ g$; this family of
  morphism is a graded central cone at $X$ (Lemma~\ref{lem:precompose-central-graded}), and
  since $(Z,\iota)$ is a terminal graded central cone, it factors uniquely through
  $\iota$. Thus $f = g$ and therefore $\iota$ is monic.
\end{proof}

\begin{lemma}\label{lem:tau}
	For $ A := (W\otimes \TT X )\otimes Y $ 
	\[ \tau'_{W\otimes X, Y } \circ \tau_{W,X} \otimes Y \circ A = 
		\TT \alpha^{-1}_{W,X,Y} \circ \tau_{W, X\otimes Y } \circ 
		W\otimes \tau'_{X, Y} \circ \alpha_{W,\TT X, Y} \circ A \] 
\end{lemma}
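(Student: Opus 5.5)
The plan is to reduce both sides of the equation to a single occurrence of the strength $\tau$ at one common pair of objects, pre- and post-composed with structural isomorphisms of $\CC$, and then to conclude by coherence for symmetric monoidal categories. The grading plays no role here: everything happens at one fixed grade $\textcolor{red}{a}$. The only properties of $\tau$ I will use are its naturality in both arguments and the associativity axiom for the graded strength from Definition \ref{def:strong-monad-graded}. At a single grade that axiom reads
\[ \tau_{A\otimes B,C} = \TT\alpha^{-1}_{A,B,C}\circ\tau_{A,B\otimes C}\circ(A\otimes\tau_{B,C})\circ\alpha_{A,B,\TT C}. \]

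\textbf{Left-hand side.} I unfold $\tau'_{W\otimes X,Y} = \TT\gamma_{Y,W\otimes X}\circ\tau_{Y,W\otimes X}\circ\gamma_{\TT(W\otimes X),Y}$. By naturality of $\gamma$, the factor $\tau_{W,X}\otimes Y$ slides past the symmetry and becomes $Y\otimes\tau_{W,X}$. This yields $\tau_{Y,W\otimes X}\circ(Y\otimes\tau_{W,X})$. By the associativity axiom read backwards, this equals $\TT\alpha_{Y,W,X}\circ\tau_{Y\otimes W,X}\circ\alpha^{-1}_{Y,W,\TT X}$. So the left-hand side becomes
\[ \TT(\gamma\circ\alpha)\circ\tau_{Y\otimes W,X}\circ(\alpha^{-1}\circ\gamma). \]

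\textbf{Right-hand side.} I unfold $\tau'_{X,Y} = \TT\gamma_{Y,X}\circ\tau_{Y,X}\circ\gamma_{\TT X,Y}$ inside $W\otimes(-)$. Naturality of $\tau_{W,-}$ in its second argument pushes $\TT(W\otimes\gamma_{Y,X})$ outside, which leaves $\tau_{W,Y\otimes X}\circ(W\otimes\tau_{Y,X})$. The associativity axiom turns this into $\TT\alpha\circ\tau_{W\otimes Y,X}\circ\alpha^{-1}$. Finally, naturality of $\tau$ in its first argument, applied to $\gamma_{W,Y}:W\otimes Y\to Y\otimes W$, converts $\tau_{W\otimes Y,X}$ into $\tau_{Y\otimes W,X}$ up to the structural maps $\gamma_{W,Y}\otimes X$ and $\gamma_{W,Y}\otimes\TT X$. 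Both sides now have the form
\[ \TT(\sigma)\circ\tau_{Y\otimes W,X}\circ\sigma', \]
where $\sigma$ and $\sigma'$ are composites of $\alpha$, $\gamma$ and their inverses.

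\textbf{Conclusion and main obstacle.} Both outer composites $\sigma$ and $\sigma'$ are structural isomorphisms with the same source and target: $(Y\otimes W)\otimes X \to (W\otimes X)\otimes Y$ on the outside, and $(W\otimes\TT X)\otimes Y\to(Y\otimes W)\otimes\TT X$ on the inside. They are built only from $\alpha$ and $\gamma$ and their inverses, and $\TT X$ is treated as an opaque object. By the coherence theorem for symmetric monoidal categories, the two inner composites coincide. The same holds for the two outer composites, and functoriality of $\TT$ lets me transport the equality of outer composites under $\TT$. This establishes the equation.

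The main obstacle is purely bookkeeping. I must keep track of which instance of $\alpha$ or $\gamma$ appears at each stage, and check that each naturality step is applied with the correct indices; this is easy to get wrong in a diagram chase. Appealing to coherence at the end avoids having to verify explicitly the hexagon identities among these structural maps.
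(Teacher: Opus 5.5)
Your proof is correct. It differs from the paper mainly because the paper's proof of this lemma is only the line ``Left $=\TT(W\otimes X)\otimes Y=$ Right'', which does not give an argument. That object is merely an intermediate stage of the left-hand composite. The common codomain is $\TT((W\otimes X)\otimes Y)$, and the right-hand path never passes through $\TT(W\otimes X)\otimes Y$.

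Your reduction of both sides to the form $\TT(\sigma)\circ\tau_{Y\otimes W,X}\circ\sigma'$ checks out step by step. It uses naturality of $\gamma$, naturality of $\tau$ in each variable, and the associativity axiom of the strength read in both directions. It also makes explicit that the lemma holds gradewise and uses neither $\eta$, $\mu$ nor the order on $G$.

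One point to tighten: coherence for symmetric monoidal categories does not say that all diagrams of structural maps commute. It only says that parallel formal composites inducing the same permutation agree. You should therefore state that you treat $W,X,Y$ (resp.\ $W,\TT X,Y$) as distinct formal variables; this matters, for instance, when $W=Y$.

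You can also avoid the coherence theorem altogether, since each structural identity you need is a single hexagon axiom. The outer one,
\[ \gamma_{Y,W\otimes X}\circ\alpha_{Y,W,X}=\alpha^{-1}_{W,X,Y}\circ(W\otimes\gamma_{Y,X})\circ\alpha_{W,Y,X}\circ(\gamma_{Y,W}\otimes X), \]
is the hexagon at $(Y,W,X)$ postcomposed with $\alpha^{-1}_{W,X,Y}$. The inner one,
\[ \alpha^{-1}_{Y,W,\TT X}\circ\gamma_{W\otimes\TT X,Y}=(\gamma_{W,Y}\otimes\TT X)\circ\alpha^{-1}_{W,Y,\TT X}\circ(W\otimes\gamma_{\TT X,Y})\circ\alpha_{W,\TT X,Y}, \]
is the inverse-form hexagon at $(W,\TT X,Y)$ precomposed with $\alpha_{W,\TT X,Y}$.
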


\begin{proof}
	Left  $ = \TT (W\otimes X) \otimes Y $ = Right.
\end{proof}

\section{Proof of Theorem \ref{th:submonad-from-cone-graded}}

\begin{proof}[Proof of Theorem~\ref{th:submonad-from-cone-graded}]
	
	This proof has 3 parts.

	First we prove that $\ZZ$ is a functor;
	then we give its graded monad structure and prove it,
	by showing all of its morphisms exist and are unique, and also natural;
	last we show that it's a strong and commutative graded monad.

	First part:
	
	The first part is following same proof strategy as that of the Theorem in \cite{central-submonads},
	but on graded monads, which are lax monoidal functors between gradation and endofunctor category of $\CC$.

	Recall that $\overset{\textcolor{red}{z}}{\ZZ}$ maps every object $X$ to its terminal central cone at $(\textcolor{red}{z},X)$.
	Let $f:X\to Y$ be a morphism. 
	$\overset{ \textcolor{red}{z}}{\TT} f\circ\overset{\textcolor{red}{z}}{\iota}_X:\overset{ \textcolor{red}{z}}{\ZZ} X\to \overset{ \textcolor{red}{z}}{\TT} Y$ is a central cone according to Lemma~\ref{lem:postcompose-central-graded}.
	Therefore, by proposition \ref{prop:unique-graded}, 
	we can define $\overset{\textcolor{red}{z}}{\ZZ} f$ as the unique map such that the following diagram commutes:
	\[\begin{tikzpicture}
	\begin{pgfonlayer}{nodelayer}
		\node [style=empty] (0) at (-1.5, 1.5) {$\overset{\textcolor{red}{z}}{\ZZ}X$};
		\node [style=empty] (1) at (1.5, 1.5) {$\overset{\textcolor{red}{z}}{\ZZ} Y$};
		\node [style=empty] (2) at (-1.5, -1.5) {$\overset{\textcolor{red}{z}}{\ZZ} X$};
		\node [style=empty] (3) at (1.5, -1.5) {$\overset{\textcolor{red}{z}}{\ZZ} Y$};
		\node [style=empty] (4) at (-1.5, 1) {};
		\node [style=empty] (5) at (1.5, 1) {};
		\node [style=empty] (6) at (1.5, -1) {};
		\node [style=empty] (7) at (-1.5, -1) {};
		\node [style=empty] (8) at (-2.25, 0) {$\iota_X$};
		\node [style=empty] (9) at (2.25, 0) {$\iota_Y$};
		\node [style=empty] (10) at (0, -2.25) {$\overset{\textcolor{red}{z}}{\ZZ} f$};
		\node [style=empty] (11) at (0, 2.25) {$\overset{\textcolor{red}{z}}{\ZZ} f$};
	\end{pgfonlayer}
	\begin{pgfonlayer}{edgelayer}
		\draw [style=to] (4) to (7);
		\draw [style=to] (5) to (6);
		\draw [style=to, in=180, out=0] (2) to (3);
		\draw [style=dashed-arrow] (0) to (1);
	\end{pgfonlayer}
\end{tikzpicture}\]
	
	It follows directly that $\overset{\textcolor{red}{z}}{\ZZ}$ maps the identity to the identity, 
	and that $\overset{\textcolor{red}{z}}{\iota}$ is natural.
	$\overset{\textcolor{red}{z}}{\ZZ}$ also preserves composition, which follows by the commutative diagram below:
	\[\begin{tikzpicture}
	\begin{pgfonlayer}{nodelayer}
		\node [style=empty] (0) at (-1.5, 3) {$\overset{\textcolor{red}{z}}{\ZZ} W$};
		\node [style=empty] (1) at (-1.5, 0) {$\overset{\textcolor{red}{z}}{\ZZ} X$};
		\node [style=empty] (2) at (1.5, 3) {$\overset{\textcolor{red}{z}}{\TT} W$};
		\node [style=empty] (3) at (1.5, 0) {$\overset{\textcolor{red}{z}}{\TT} X$};
		\node [style=empty] (4) at (-1, 3) {};
		\node [style=empty] (5) at (-1, 0) {};
		\node [style=empty] (6) at (1, 0) {};
		\node [style=empty] (7) at (1, 3) {};
		\node [style=empty] (8) at (0, 3.75) {$\iota_W$};
		\node [style=empty] (9) at (0, 0.75) {$\iota_X$};
		\node [style=empty] (10) at (2.25, 1.5) {$\overset{\textcolor{red}{z}}{\TT} g$};
		\node [style=empty] (11) at (-2.25, 1.5) {$\overset{\textcolor{red}{z}}{\ZZ} g$};
		\node [style=empty] (12) at (-1.5, -3) {$\overset{\textcolor{red}{z}}{\ZZ} Y$};
		\node [style=empty] (13) at (-2.25, -1.5) {$\overset{\textcolor{red}{z}}{\ZZ} f$};
		\node [style=empty] (14) at (-5.5, 0) {$\overset{\textcolor{red}{z}}{\ZZ}(f\circ g)$};
		\node [style=empty] (15) at (-1, -3) {};
		\node [style=empty] (16) at (1, -3) {};
		\node [style=empty] (17) at (1.5, -3) {$\overset{\textcolor{red}{z}}{\TT} Y$};
		\node [style=empty] (18) at (0, -2.25) {$\iota_Y$};
		\node [style=empty] (19) at (2.25, -1.5) {$\overset{\textcolor{red}{z}}{\TT} f$};
		\node [style=empty] (20) at (5.5, 0) {$\overset{\textcolor{red}{z}}{\TT}(f\circ g)$};
	\end{pgfonlayer}
	\begin{pgfonlayer}{edgelayer}
		\draw [style=to] (4) to (7);
		\draw [style=to] (5) to (6);
		\draw [style=to] (2) to (3);
		\draw [style=dashed-arrow, bend right=75] (0) to (12);
		\draw [style=to] (3) to (17);
		\draw [style=to] (15) to (16);
		\draw [style=to] (0) to (1);
		\draw [style=to] (1) to (12);
		\draw [style=to, bend left=75] (2) to (17);
	\end{pgfonlayer}
\end{tikzpicture}\]
	This proves that $\overset{\textcolor{red}{z}}{\ZZ}$ is a functor. 

	Second part.

	Then we describe its graded monad structure and prove all its morphisms exist and are unique.

	This part we give different proof strategy from that of \cite{central-submonads}, 
	which bypasses the Kleisli constructure and independent of premonoidal center.

	Definition of the monadic unit $\eta^Z_X$ 
	(the inequality arrow with respect to Definition \Ref{def:morph-monad-graded} becomes equal because now it's inclusion):
	\[\begin{tikzpicture}
	\begin{pgfonlayer}{nodelayer}
		\node [style=empty] (0) at (-2, 1) {$X$};
		\node [style=empty] (1) at (-1.5, -2) {$\overset{i}{\TT} X$};
		\node [style=empty] (2) at (1.75, 1) {$\overset{i}{\ZZ} X$};
		\node [style=empty] (3) at (-1.5, 1) {};
		\node [style=empty] (4) at (1.25, 1) {};
		\node [style=empty] (5) at (-1, -2) {};
		\node [style=empty] (6) at (1.75, 0.5) {};
		\node [style=empty] (7) at (-2.425, -0.65) {$\eta_X$};
		\node [style=empty] (8) at (2.5, -0.4) {$\iota_X$};
		\node [style=empty] (9) at (-0.025, 1.525) {$\eta^\ZZ_X$};
		\node [style=empty] (12) at (2, -2) {$\overset{i}{\TT} X$};
		\node [style=empty] (13) at (1.75, -1.5) {};
		\node [style=empty] (14) at (1.5, -2) {};
		\node [style=empty] (15) at (-1.5, 0.5) {};
		\node [style=empty] (16) at (-1.5, -1.5) {};
	\end{pgfonlayer}
	\begin{pgfonlayer}{edgelayer}
		\draw [style=dashed-arrow] (3) to (4);
		\draw [style=to] (6) to (13);
		\draw [style=to] (15) to (16);
		\draw [style=equal-arrow] (5) to (14);
	\end{pgfonlayer}
\end{tikzpicture}\]
	By Definition \ref{def:central-cone-graded}, 
	the universal property of terminal graded central cone indicates that, 
	for any graded central cone at $(\textcolor{red}{z}, X)$,
	there exists a unique morphism of graded central cones to $\overset{\textcolor{red}{z}}{\ZZ}$, 
	hence we need to prove that all other arrows in this definition form graded central cones.

	What's left is that $\eta_X$ forms a central cone, 
	it is proved by the following diagram:
	\[\stikz[0.7]{../figures/central-unit-proof-brut-graded.tikz}\]
	(1) $\eta$ property of graded co-strength; 
	(2) $\eta$ and $\tau$ are natural; (3),(4) definition of graded monad; 
	(5) $\TT$ is a functor with $\eta$ property of graded co-strength; 
	(6) $\tau$ is natural and rest are equalities.

	Next, definition of multiplication $\mu^\ZZ_X$ 
	(the inequality arrow with respect to Definition \Ref{def:morph-monad-graded} becomes equal because now it's inclusion):
	\[\begin{tikzpicture}
	\begin{pgfonlayer}{nodelayer}
		\node [style=empty] (19) at (-7.5, -6.75) {$\overset{\textcolor{red}{a}}{\ZZ}  \overset{\textcolor{teal}{b}}{\ZZ} X$};
		\node [style=empty] (20) at (-7.5, -11.75) {$\overset{\textcolor{red}{a}}{\TT}  \overset{\textcolor{teal}{b}}{\ZZ} X$};
		\node [style=empty] (21) at (-7.5, -7.25) {};
		\node [style=empty] (22) at (-7.5, -11.25) {};
		\node [style=empty] (23) at (-2.5, -11.75) {$\overset{\textcolor{red}{a}}{\TT}  \overset{\textcolor{teal}{b}}{\TT} X$};
		\node [style=empty] (24) at (7.5, -11.75) {$\overset{\textcolor{red}{a}\ast \textcolor{teal}{b}}{\TT}X$};
		\node [style=empty] (25) at (7.5, -6.75) {$\overset{\textcolor{red}{a}\ast \textcolor{teal}{b}}{\ZZ} X$};
		\node [style=empty] (26) at (7.5, -7.25) {};
		\node [style=empty] (27) at (0, -6) {$\mu^{\ZZ}_X$};
		\node [style=empty] (28) at (-5, -12.75) {$\overset{\textcolor{red}{a}}{\TT} \iota_X$};
		\node [style=empty] (29) at (0, -12.75) {$\mu_X^{\TT}$};
		\node [style=empty] (30) at (-8.5, -9.25) {$\iota_{\overset{\textcolor{teal}{b}}{\ZZ}X}$};
		\node [style=empty] (31) at (8.25, -9.25) {$\iota_X$};
		\node [style=empty] (32) at (7.5, -11.25) {};
		\node [style=empty] (33) at (2.5, -11.75) {$\overset{\textcolor{red}{a}\ast \textcolor{teal}{b}}{\TT}X$};
	\end{pgfonlayer}
	\begin{pgfonlayer}{edgelayer}
		\draw [style=to] (20) to (23);
		\draw [style=to] (21) to (22);
		\draw [style=dashed-arrow] (19) to (25);
		\draw [style=to] (26) to (32);
		\draw [style=to] (23) to (33);
		\draw [style=equal-arrow] (33) to (24);
	\end{pgfonlayer}
\end{tikzpicture}\]
	By Definition \ref{def:central-cone-graded}, 
	the universal property of terminal graded central cone indicates that, 
	for any graded central cone at $(\textcolor{red}{z}, X)$,
	there exists a unique morphism of graded central cones to $\overset{\textcolor{red}{z}}{\ZZ}$, 
	hence we need to prove that all other arrows in this definition form graded central cones.

	What's left is that $\mu_X^{\TT} \circ \overset{ \textcolor{red}{a}}{\TT} \iota_X \circ \iota_{\overset{\textcolor{teal}{b}}{\ZZ}X} $ forms a central cone, 
	it is proved by the following diagram:

	\[\stikz[0.4]{../figures/central-mult-proof-brut-graded-1.tikz}\]
	(1) $\iota_{\ZZ X}$ is central; (2) $\tau'$ and $\iota_X$ are natural; 
	(3) proposition \ref{prop:costrength-graded}, $\mu$ property of graded co-strength 
	(take normal monad as special case of graded monad); 
	(4) $\mu$ and $\tau$ are natural; (5) $\TT$ is a functor and $\tau$, $\iota_X$ are natural; 
	(6) $\TT$ is a functor and $\iota_X$ is central; (7) $\mu$ and $\iota_X$ are natural; 
	(8) $\tau$ and $\iota_X$ are natural; (9) $\TT$ is a functor and $\tau'$, $\iota_X$ are natural; 
	(10) $\mu$ and $\iota_X$ are natural; (11)  $\mu$ and $\tau'$ are natural;
	(12),(13) and (17) definition of monad; (14) $\mu$ and $\tau$ are natural;
	(15) $\TT$ is a functor and proposition \ref{prop:costrength-graded}, $\mu$ property of graded co-strength; 
	(16) $\mu$ and $\tau'$ are natural and rest are equalities.

	Last, definition of the strength $\tau^\ZZ_{W,X}$:
	\[\begin{tikzpicture}
	\begin{pgfonlayer}{nodelayer}
		\node [style=empty] (0) at (-3.5, 2) {$W\otimes\overset{\textcolor{red}{z}}{\ZZ} X$};
		\node [style=empty] (1) at (2.5, 2) {$\overset{\textcolor{red}{z}}{\ZZ}(W\otimes X)$};
		\node [style=empty] (2) at (-3.5, -1) {$W\otimes\overset{  \textcolor{red}{z}}{\TT} X$};
		\node [style=empty] (3) at (2.5, -1) {$\overset{  \textcolor{red}{z}}{\TT}(W\otimes X)$};
		\node [style=empty] (4) at (-3.5, 1.5) {};
		\node [style=empty] (5) at (-3.5, -0.5) {};
		\node [style=empty] (6) at (2.5, 1.5) {};
		\node [style=empty] (7) at (2.5, -0.5) {};
		\node [style=empty] (8) at (-4.75, 0.5) {$W\otimes\iota_X$};
		\node [style=empty] (9) at (3.5, 0.5) {$\iota_{W\otimes X}$};
		\node [style=empty] (10) at (-0.625, 2.625) {$\tau^\ZZ_{W,X}$};
		\node [style=empty] (11) at (-0.625, -1.5) {$\tau_{W,X}$};
	\end{pgfonlayer}
	\begin{pgfonlayer}{edgelayer}
		\draw [style=to] (4) to (5);
		\draw [style=to] (2) to (3);
		\draw [style=to] (6) to (7);
		\draw [style=dashed-arrow] (0) to (1);
	\end{pgfonlayer}
\end{tikzpicture}\]
	By Definition \ref{def:central-cone-graded}, 
	the universal property of terminal graded central cone indicates that, 
	for any graded central cone at $(\textcolor{red}{z}, X)$,
	there exists a unique morphism of graded central cones to $\overset{\textcolor{red}{z}}{\ZZ}$, 
	hence we need to prove that all other arrows in this definition form graded central cones.

	What's left is that $\tau_{W,X} \circ (W \otimes \iota_X) $ forms a central cone, 
	it is proved by the following diagram:
	\[\stikz[0.4]{../figures/central-strength-proof-brut-graded.tikz}\]
	(1), (3) $\alpha$, $\iota_X$ are natural; (2) lemma \ref{lem:tau}; (4) $\iota_X$ is central; 
	(5), (7) $\tau$ is natural; (6) $T$ is a functor, $\alpha \circ \alpha^{-1} = id$, definition on graded strength; 
	(8), (10), (11) definition of strength; (9)  $\tau$, $\tau'$ are natural; (12), (14) $\mu$ and $\alpha^{-1}$ are natural; 
	(13) $T$ is a functor, lemma \ref{lem:tau}; and rest are equalities.

	The rest of the proof is following the same strategy as that of centre Theorem in 
	\cite{central-submonads}.

	The last three definitions are exactly those of a morphism of strong 
	graded monads (see Definition \ref{def:morph-monad-graded}).  \\
	Using the fact that $\iota$ is monic (see Lemma~\ref{lem:monic}),
	the following commutative diagram shows that $\eta^\ZZ$ is natural:
	\[\scalebox{0.7}{\begin{tikzpicture}
	\begin{pgfonlayer}{nodelayer}
		\node [style=empty] (0) at (-6, 5) {$X$};
		\node [style=empty] (1) at (6, 5) {$\overset{i}{\ZZ} X$};
		\node [style=empty] (2) at (-6, -3) {$Y$};
		\node [style=empty] (3) at (6, -3) {$\overset{i}{\TT}Y$};
		\node [style=empty] (4) at (-6, 4.5) {};
		\node [style=empty] (5) at (-6, -2.5) {};
		\node [style=empty] (6) at (6, 4.5) {};
		\node [style=empty] (7) at (6, 1) {$\overset{i}{\ZZ} Y$};
		\node [style=empty] (8) at (6, 1.5) {};
		\node [style=empty] (9) at (6, 0.5) {};
		\node [style=empty] (10) at (6, -2.5) {};
		\node [style=empty] (11) at (0, -3) {$\overset{i}{\ZZ}Y$};
		\node [style=empty] (13) at (-2, 3.25) {$\overset{i}{\TT} X$};
		\node [style=empty] (14) at (1.75, 3.25) {$\overset{ i}{\TT}  X$};
		\node [style=empty] (15) at (5.5, 4.5) {};
		\node [style=empty] (16) at (-5.5, 4.5) {};
		\node [style=empty] (17) at (1.75, 2.75) {};
		\node [style=empty] (18) at (5.5, -2.5) {};
		\node [style=empty] (19) at (0, 5.75) {$\eta^{\overset{i}{\ZZ}}_X$};
		\node [style=empty] (20) at (-7, 1.25) {$f$};
		\node [style=empty] (21) at (7, 3.25) {$\overset{i}{\ZZ}f$};
		\node [style=empty] (22) at (2.25, 0.5) {$\overset{ i}{\TT} f$};
		\node [style=empty] (23) at (-4.25, -1.5) {$\eta_Y$};
		\node [style=empty] (24) at (-3, -3.75) {$\eta^{\overset{i}{\ZZ}}_Y$};
		\node [style=empty] (25) at (3, -3.75) {$\iota_Y$};
		\node [style=empty] (26) at (6.75, -1) {$\iota_Y$};
		\node [style=empty] (27) at (-3.75, 3.25) {$\eta_X$};
		\node [style=empty] (28) at (3.75, 3.5) {$\iota_X$};
		\node [style=none] (29) at (0, 4.25) {\textcolor{blue}{(1)}};
		\node [style=none] (30) at (4.25, 2) {\textcolor{blue}{(2)}};
		\node [style=none] (31) at (-4, 1) {\textcolor{blue}{(3)}};
		\node [style=none] (32) at (-1.5, -2.25) {\textcolor{blue}{(4)}};
		\node [style=empty] (33) at (-2, -1) {$\overset{i}{\TT} Y$};
		\node [style=empty] (34) at (-2, 2.75) {};
		\node [style=empty] (35) at (-2, -0.5) {};
		\node [style=empty] (36) at (-1.5, 1) {$\overset{i}{\TT} f$};
		\node [style=none] (37) at (0.5, 1) {\textcolor{blue}{(5)}};
	\end{pgfonlayer}
	\begin{pgfonlayer}{edgelayer}
		\draw [style=to] (0) to (1);
		\draw [style=to] (6) to (8);
		\draw [style=to] (4) to (5);
		\draw [style=to] (2) to (11);
		\draw [style=mono] (11) to (3);
		\draw [style=mono] (9) to (10);
		\draw [style=to] (16) to (13);
		\draw [style=to] (15) to (14);
		\draw [style=to] (17) to (18);
		\draw [style=to] (5) to (33);
		\draw [style=to] (34) to (35);
		\draw [style=equal-arrow] (13) to (14);
		\draw [style=equal-arrow] (35) to (18);
	\end{pgfonlayer}
\end{tikzpicture}}\]
	  (1), (4) definition of $\eta^\ZZ$; (2) $\iota$ is natural; (3) $\eta$ is natural;
	  and (5) equality.
	  Thus, we have proven that for any $f:X\to
	  Y$, $\iota_Y\circ\ZZ f\circ\eta^\ZZ_X = \iota_Y\circ\eta^\ZZ_Y\circ f$.
	  Besides, $\iota$ is monic, thus $\ZZ f\circ\eta^\ZZ_X =\eta^\ZZ_Y\circ f$
	  which proves that $\eta^\ZZ$ is natural.  We will prove all the remaining
	  diagrams with the same reasoning. \\
	The following commutative diagram shows that $\mu^\ZZ$ is natural. 
	\[\scalebox{0.7}{\begin{tikzpicture}
	\begin{pgfonlayer}{nodelayer}
		\node [style=empty] (0) at (-11, 5) {$\overset{\textcolor{red}{a}}{\ZZ}  \overset{\textcolor{teal}{b}}{\ZZ} X$};
		\node [style=empty] (1) at (11.5, 5) {$\overset{\textcolor{red}{a}\ast \textcolor{teal}{b}}{\ZZ} X$};
		\node [style=empty] (2) at (-11, -7) {$\overset{\textcolor{red}{a}}{\ZZ}  \overset{\textcolor{teal}{b}}{\ZZ} Y$};
		\node [style=empty] (3) at (11.5, -7) {$\overset{ \textcolor{red}{a}\ast \textcolor{teal}{b}}{\TT} Y$};
		\node [style=empty] (4) at (-11, 4.5) {};
		\node [style=empty] (5) at (-11, -6.5) {};
		\node [style=empty] (6) at (0, -7) {$\overset{\textcolor{red}{a}\ast \textcolor{teal}{b}}{\ZZ} Y$};
		\node [style=empty] (7) at (11.5, -1) {$\overset{\textcolor{red}{a}\ast \textcolor{teal}{b}}{\ZZ} Y$};
		\node [style=empty] (8) at (11.5, 4.5) {};
		\node [style=empty] (9) at (11.5, -0.5) {};
		\node [style=empty] (10) at (11.5, -1.5) {};
		\node [style=empty] (11) at (11.5, -6.5) {};
		\node [style=empty] (12) at (-6.5, 2) {$\overset{ \textcolor{red}{a}}{\TT}  \overset{  \textcolor{teal}{b}}{\TT}  X$};
		\node [style=empty] (13) at (7, 2) {$\overset{ \textcolor{red}{a}\ast \textcolor{teal}{b}}{\TT} X$};
		\node [style=empty] (14) at (-10.5, 4.5) {};
		\node [style=empty] (15) at (-7, 2.25) {};
		\node [style=empty] (16) at (7.5, 2.25) {};
		\node [style=empty] (17) at (11, 4.5) {};
		\node [style=empty] (19) at (-10.5, -6.5) {};
		\node [style=empty] (20) at (-6.5, -3.25) {$\overset{ \textcolor{red}{a}}{\TT}  \overset{  \textcolor{teal}{b}}{\TT}  Y$};
		\node [style=empty] (23) at (-6.5, 1.5) {};
		\node [style=empty] (24) at (-6.5, -2.75) {};
		\node [style=empty] (25) at (7.25, 1.5) {};
		\node [style=empty] (26) at (11, -6.25) {};
		\node [style=empty] (27) at (-12.25, -1) {$\overset{\textcolor{red}{a}}{\ZZ}  \overset{\textcolor{teal}{b}}{\ZZ} f$};
		\node [style=empty] (28) at (12.5, 2) {$\overset{\textcolor{red}{a}\ast \textcolor{teal}{b}}{\ZZ}f$};
		\node [style=empty] (29) at (12.5, -4) {$\iota_Y$};
		\node [style=empty] (30) at (5.75, -7.75) {$\iota_Y$};
		\node [style=empty] (31) at (0, 5.75) {$\mu^{\ZZ,\textcolor{red}{a},\textcolor{teal}{b}}_X$};
		\node [style=empty] (32) at (-9, 2.75) {$\iota\iota_X$};
		\node [style=empty] (33) at (-9, -4.25) {$\iota\iota_Y$};
		\node [style=empty] (34) at (7.5, -1.75) {$\overset{ \textcolor{red}{a}\ast \textcolor{teal}{b}}{\TT} f$};
		\node [style=empty] (35) at (-3, 2.75) {$\mu^{\TT,   \textcolor{red}{a},   \textcolor{teal}{b}}_X$};
		\node [style=empty] (36) at (-7.75, -0.5) {$\overset{ \textcolor{red}{a}}{\TT}  \overset{  \textcolor{teal}{b}}{\TT}  f$};
		\node [style=empty] (37) at (-3, -2.5) {$\mu^{\TT,   \textcolor{red}{a},   \textcolor{teal}{b}}_Y$};
		\node [style=empty] (38) at (-5.5, -7.75) {$\mu^{\ZZ,\textcolor{red}{a},\textcolor{teal}{b}}_Y$};
		\node [style=none] (39) at (0, 3.5) {\textcolor{blue}{(1)}};
		\node [style=none] (40) at (-8.75, -2) {\textcolor{blue}{(2)}};
		\node [style=none] (41) at (-3, -0.5) {\textcolor{blue}{(3)}};
		\node [style=none] (42) at (9.75, 0.75) {\textcolor{blue}{(4)}};
		\node [style=none] (43) at (-2.75, -5) {\textcolor{blue}{(5)}};
		\node [style=empty] (46) at (0.75, -3.25) {$\overset{ \textcolor{red}{a}\ast   \textcolor{teal}{b}}{\TT} Y$};
		\node [style=empty] (47) at (0.75, 2) {$\overset{ \textcolor{red}{a}\ast   \textcolor{teal}{b}}{\TT} X$};
		\node [style=empty] (48) at (0.75, -2.75) {};
		\node [style=empty] (51) at (1.75, -0.5) {$\overset{ \textcolor{red}{a}\ast   \textcolor{teal}{b}}{\TT} f$};
		\node [style=empty] (52) at (9.75, 3) {$\iota_X$};
		\node [style=empty] (53) at (0.75, 1.5) {};
		\node [style=none] (54) at (4.75, -0.75) {\textcolor{blue}{(6)}};
	\end{pgfonlayer}
	\begin{pgfonlayer}{edgelayer}
		\draw [style=to] (4) to (5);
		\draw [style=mono] (10) to (11);
		\draw [style=to] (8) to (9);
		\draw [style=to] (2) to (6);
		\draw [style=mono] (6) to (3);
		\draw [style=to] (0) to (1);
		\draw [style=to] (14) to (15);
		\draw [style=to] (17) to (16);
		\draw [style=to] (19) to (20);
		\draw [style=to] (23) to (24);
		\draw [style=to] (25) to (26);
		\draw [style=to] (20) to (46);
		\draw [style=to] (12) to (47);
		\draw [style=to] (53) to (48);
		\draw [style=equal-arrow] (47) to (13);
		\draw [style=equal-arrow] (46) to (26);
	\end{pgfonlayer}
\end{tikzpicture}}\]
	(1) (5) definition of $\mu^\ZZ$; (2) (4) $\iota$ is natural; (3) $\mu$ is natural and (6) equalities.

	The commutative diagrams showing that $\tau^\ZZ$ is natural are just as
	those on normal monad by replacing $\ZZ$ as $\overset{\textcolor{red}{z}}{\ZZ}$, 
	$\TT$ as $\overset{ \textcolor{red}{z}}{\TT}$ and $\iota$ as family of morphisms
	$\overset{\textcolor{red}{z}}{\iota} :
	\overset{\textcolor{red}{z}}{\ZZ} \to \overset{ \textcolor{red}{z}}{\TT}$ indexed by elements in $\GG$.

	The following commutative diagrams show that $\tau^\ZZ$ is natural.
	\[\scalebox{0.7}{\begin{tikzpicture}
	\begin{pgfonlayer}{nodelayer}
		\node [style=empty] (0) at (-13, 6) {$A\otimes\ZZ C$};
		\node [style=empty] (1) at (5, 6) {$\ZZ(A\otimes C)$};
		\node [style=empty] (2) at (-13, -2) {$B\otimes\ZZ C$};
		\node [style=empty] (3) at (5, -2) {$\TT(B\otimes C)$};
		\node [style=empty] (4) at (-6, -2) {$\ZZ(B\otimes C)$};
		\node [style=empty] (5) at (5, 2) {$\ZZ(B\otimes C)$};
		\node [style=empty] (6) at (5, 5.5) {};
		\node [style=empty] (7) at (5, 2.5) {};
		\node [style=empty] (8) at (5, 1.5) {};
		\node [style=empty] (9) at (5, -1.5) {};
		\node [style=empty] (10) at (-13, 5.5) {};
		\node [style=empty] (11) at (-13, -1.5) {};
		\node [style=empty] (12) at (-7, 0.5) {$B\otimes\TT C$};
		\node [style=empty] (13) at (-7, 4) {$A\otimes\TT C$};
		\node [style=empty] (14) at (-1, 4) {$\TT(A\otimes C)$};
		\node [style=empty] (15) at (-7, 3.5) {};
		\node [style=empty] (16) at (-7, 1) {};
		\node [style=empty] (17) at (-5.5, 0.25) {};
		\node [style=empty] (18) at (3.5, -1.5) {};
		\node [style=empty] (19) at (4.25, -1.25) {};
		\node [style=empty] (20) at (-0.75, 3.5) {};
		\node [style=empty] (21) at (-12.5, 5.5) {};
		\node [style=empty] (22) at (-8.5, 4.25) {};
		\node [style=empty] (23) at (4.5, 5.5) {};
		\node [style=empty] (24) at (0.5, 4.25) {};
		\node [style=empty] (25) at (-12.5, -1.75) {};
		\node [style=empty] (26) at (-8.25, 0) {};
		\node [style=empty] (27) at (-4, 6.75) {$\tau^\ZZ_{A,C}$};
		\node [style=empty] (28) at (-4, 4.5) {$\tau_{A,C}$};
		\node [style=empty] (29) at (-9.5, -2.75) {$\tau^\ZZ_{B,C}$};
		\node [style=empty] (30) at (-1, -2.75) {$\iota_{B\otimes C}$};
		\node [style=empty] (31) at (6, 0.25) {$\iota_{B\otimes C}$};
		\node [style=empty] (32) at (-3, -0.75) {$\tau_{B,C}$};
		\node [style=empty] (33) at (-11.5, 2) {$f\otimes\ZZ C$};
		\node [style=empty] (34) at (-9.25, 5) {$A\otimes\iota$};
		\node [style=empty] (35) at (-9.25, -1) {$B\otimes\iota$};
		\node [style=empty] (36) at (6.5, 4) {$\ZZ(f\otimes C)$};
		\node [style=empty] (37) at (0, 0.75) {$\TT(f\otimes C)$};
		\node [style=empty] (38) at (-5.5, 2.25) {$f\otimes\TT C$};
		\node [style=empty] (39) at (2.75, 4.5) {$\iota$};
		\node [style=none] (40) at (-4, 5.25) {\textcolor{blue}{(1)}};
		\node [style=none] (41) at (-9, 2) {\textcolor{blue}{(2)}};
		\node [style=none] (42) at (-2.5, 2) {\textcolor{blue}{(3)}};
		\node [style=none] (43) at (2.5, 2.25) {\textcolor{blue}{(4)}};
		\node [style=none] (44) at (-6, -1) {\textcolor{blue}{(5)}};
	\end{pgfonlayer}
	\begin{pgfonlayer}{edgelayer}
		\draw [style=to] (0) to (1);
		\draw [style=to] (10) to (11);
		\draw [style=to] (21) to (22);
		\draw [style=to] (23) to (24);
		\draw [style=to] (13) to (14);
		\draw [style=to] (15) to (16);
		\draw [style=to] (25) to (26);
		\draw [style=to] (2) to (4);
		\draw [style=to] (17) to (18);
		\draw [style=to] (20) to (19);
		\draw [style=to] (6) to (7);
		\draw [style=mono] (4) to (3);
		\draw [style=mono] (8) to (9);
	\end{pgfonlayer}
\end{tikzpicture}}\]
	(1) definition of $\tau^\ZZ$, (2) $\iota$ is natural, (3) $\tau$ is natural, (4) $\iota$ is natural
	and (5) definition of $\tau^\ZZ$. \\
	\[\scalebox{0.7}{\begin{tikzpicture}
	\begin{pgfonlayer}{nodelayer}
		\node [style=empty] (0) at (-9, 4.25) {$A\otimes\ZZ B$};
		\node [style=empty] (1) at (9, 4.25) {$\ZZ(A\otimes B)$};
		\node [style=empty] (2) at (-9, -3.75) {$A\otimes\ZZ C$};
		\node [style=empty] (3) at (9, -3.75) {$\TT(A\otimes C)$};
		\node [style=empty] (4) at (-2, -3.75) {$\ZZ(A\otimes C)$};
		\node [style=empty] (5) at (9, 0.25) {$\ZZ(A\otimes C)$};
		\node [style=empty] (6) at (9, 3.75) {};
		\node [style=empty] (7) at (9, 0.75) {};
		\node [style=empty] (8) at (9, -0.25) {};
		\node [style=empty] (9) at (9, -3.25) {};
		\node [style=empty] (10) at (-9, 3.75) {};
		\node [style=empty] (11) at (-9, -3.25) {};
		\node [style=empty] (12) at (-3, -1.25) {$A\otimes\TT C$};
		\node [style=empty] (13) at (-3, 2.25) {$A\otimes\TT B$};
		\node [style=empty] (14) at (3, 2.25) {$\TT(A\otimes B)$};
		\node [style=empty] (15) at (-3, 1.75) {};
		\node [style=empty] (16) at (-3, -0.75) {};
		\node [style=empty] (17) at (-1.5, -1.5) {};
		\node [style=empty] (18) at (7.5, -3.25) {};
		\node [style=empty] (19) at (8.25, -3) {};
		\node [style=empty] (20) at (3.25, 1.75) {};
		\node [style=empty] (21) at (-8.5, 3.75) {};
		\node [style=empty] (22) at (-4.5, 2.5) {};
		\node [style=empty] (23) at (8.5, 3.75) {};
		\node [style=empty] (24) at (4.5, 2.5) {};
		\node [style=empty] (25) at (-8.5, -3.5) {};
		\node [style=empty] (26) at (-4.25, -1.75) {};
		\node [style=empty] (27) at (0, 5) {$\tau^\ZZ_{A,B}$};
		\node [style=empty] (28) at (0, 2.75) {$\tau_{A,B}$};
		\node [style=empty] (29) at (-5.5, -4.5) {$\tau^\ZZ_{A,C}$};
		\node [style=empty] (30) at (3, -4.5) {$\iota_{A\otimes C}$};
		\node [style=empty] (31) at (10, -1.5) {$\iota_{A\otimes C}$};
		\node [style=empty] (32) at (1, -2.5) {$\tau_{A,C}$};
		\node [style=empty] (33) at (-7.5, 0.25) {$A\otimes\ZZ f$};
		\node [style=empty] (34) at (-5.25, 3.25) {$A\otimes\iota$};
		\node [style=empty] (35) at (-5.25, -2.75) {$A\otimes\iota$};
		\node [style=empty] (36) at (10.5, 2.25) {$\ZZ(A\otimes f)$};
		\node [style=empty] (37) at (4, -1) {$\TT(A\otimes f)$};
		\node [style=empty] (38) at (-1.5, 0.5) {$A\otimes\TT f$};
		\node [style=empty] (39) at (6.75, 2.75) {$\iota$};
		\node [style=none] (40) at (0, 3.5) {\textcolor{blue}{(1)}};
		\node [style=none] (41) at (-5, 0.25) {\textcolor{blue}{(2)}};
		\node [style=none] (42) at (1.5, 0.25) {\textcolor{blue}{(3)}};
		\node [style=none] (43) at (6.25, 1) {\textcolor{blue}{(4)}};
		\node [style=none] (44) at (-1.75, -2.75) {\textcolor{blue}{(5)}};
	\end{pgfonlayer}
	\begin{pgfonlayer}{edgelayer}
		\draw [style=to] (0) to (1);
		\draw [style=to] (10) to (11);
		\draw [style=to] (21) to (22);
		\draw [style=to] (23) to (24);
		\draw [style=to] (13) to (14);
		\draw [style=to] (15) to (16);
		\draw [style=to] (25) to (26);
		\draw [style=to] (2) to (4);
		\draw [style=to] (17) to (18);
		\draw [style=to] (20) to (19);
		\draw [style=to] (6) to (7);
		\draw [style=mono] (4) to (3);
		\draw [style=mono] (8) to (9);
	\end{pgfonlayer}
\end{tikzpicture}}\]
	(1) definition of $\tau^\ZZ$, (2) $\iota$ is natural, (3) $\tau$ is natural,
	(4) $\iota$ is natural and (5) definition of $\tau^\ZZ$. 
	\\ 
	The following commutative diagrams prove that $\ZZ$ is a graded monad 
	($\overset{\textcolor{red}{z}}{\ZZ} \cdot I $ is omitted because it is very similar to $I\cdot \overset{\textcolor{red}{z}}{\ZZ} $).
	\[\scalebox{0.6}{\input{z-monad-1-graded.tikz}}\]
	Top : (1)  $\eta^\ZZ$ and $\iota$ are natural; (2) definition of $\eta^{\ZZ}$,
	(3) (5) (7) (8) equality; (4) definition of $\eta$ and (6) definition of $\mu^{\ZZ}$. \\
	Down : (1) equality; (2) definition of $\mu^\ZZ$, $\ZZ$ is a functor; 
	(3) $\mu$ and $\iota$ are functors; (4) $\mu^\ZZ$ and $\iota$ are functors; 
	(5) (7) (8) definition of $\mu^\ZZ$, $\TT$ is a functor and 
	(6) definition of graded monad. \\

	Last Part:

	$\ZZ$ is proven strong with very similar diagrams. \\
	The following commutative diagram proves that $\ZZ$ is a commutative graded monad:
	  \begin{equation}\label{eq:proof-commutative-graded}
		  \scalebox{0.6}{\begin{tikzpicture}
	\begin{pgfonlayer}{nodelayer}
		\node [style=empty] (0) at (-10, 6) {$\overset{\textcolor{red}{a}}{\ZZ} X \otimes\overset{\textcolor{teal}{b}}{\ZZ} Y $};
		\node [style=empty] (1) at (24.5, 6) {$\overset{\textcolor{red}{a}}{\ZZ}    \overset{\textcolor{teal}{b}}{\ZZ} (X\otimes Y)$};
		\node [style=empty] (2) at (-10, -20) {$\overset{\textcolor{teal}{b}}{\ZZ}  \overset{\textcolor{red}{a}}{\ZZ}(X\otimes Y)$};
		\node [style=empty] (3) at (14.25, -20) {$\overset{ (\textcolor{teal}{b}\ast \textcolor{red}{a})}{\TT}(X\otimes Y)$};
		\node [style=empty] (4) at (-10, -10) {$\overset{\textcolor{teal}{b}}{\ZZ} (\overset{\textcolor{red}{a}}{\ZZ}  X\otimes Y)$};
		\node [style=empty] (5) at (24.5, -1.5) {$\overset{\textcolor{red}{a}\ast  \textcolor{teal}{b}}{\ZZ}(X\otimes Y)$};
		\node [style=empty] (6) at (5, -20) {$\overset{\textcolor{teal}{b}\ast \textcolor{red}{a}}{\ZZ}(X\otimes Y)$};
		\node [style=empty] (7) at (10.25, 6) {$\overset{\textcolor{red}{a}}{\ZZ} ( X \otimes\overset{\textcolor{teal}{b}}{\ZZ} Y )$};
		\node [style=empty] (8) at (-10, 5.5) {};
		\node [style=empty] (9) at (-10, -9.5) {};
		\node [style=empty] (10) at (-10, -10.5) {};
		\node [style=empty] (11) at (-10, -19.5) {};
		\node [style=empty] (12) at (24.5, 5.5) {};
		\node [style=empty] (13) at (24.5, -1) {};
		\node [style=empty] (14) at (24.5, -2) {};
		\node [style=empty] (15) at (24.5, -6.75) {};
		\node [style=empty] (16) at (1.5, 6.75) {$\tau'$};
		\node [style=empty] (17) at (17.75, 6.75) {$\overset{\textcolor{red}{a}}{\ZZ}\tau$};
		\node [style=empty] (18) at (26.25, 2) {$\mu^\ZZ_{X\otimes Y}$};
		\node [style=empty] (19) at (-1.5, -20.75) {$\mu^\ZZ_{X\otimes Y}$};
		\node [style=empty] (20) at (-11.25, -15) {$\overset{\textcolor{teal}{b}}{\ZZ}\tau'$};
		\node [style=empty] (21) at (-11.25, -2.25) {$\tau$};
		\node [style=empty] (22) at (-3.5, -17) {$\overset{\textcolor{teal}{b}}{\ZZ}  \overset{ \textcolor{red}{a}}{\TT}(X\otimes Y)$};
		\node [style=empty] (23) at (5.5, -17) {$\overset{ \textcolor{teal}{b}}{\TT}  \overset{ \textcolor{red}{a}}{\TT}(X\otimes Y)$};
		\node [style=empty] (24) at (-9.25, -19.5) {};
		\node [style=empty] (25) at (-4.25, -17.5) {};
		\node [style=empty] (26) at (8.25, -17) {};
		\node [style=empty] (28) at (19.5, 2) {$\overset{\textcolor{red}{a}}{\ZZ}\overset{ \textcolor{teal}{b}}{\TT} (X\otimes Y)$};
		\node [style=empty] (29) at (19.5, -5.25) {$\overset{ \textcolor{red}{a}}{\TT}  \overset{ \textcolor{teal}{b}}{\TT}(X\otimes Y)$};
		\node [style=empty] (30) at (19.5, 1.5) {};
		\node [style=empty] (31) at (19.5, -4.75) {};
		\node [style=empty] (32) at (24, 5.5) {};
		\node [style=empty] (33) at (19.75, 2.5) {};
		\node [style=empty] (35) at (19.5, -5.75) {};
		\node [style=empty] (36) at (-3.5, -10) {$ \overset{\textcolor{teal}{b}}{\ZZ}( \overset{ \textcolor{red}{a}}{\TT} X\otimes Y)$};
		\node [style=empty] (39) at (-3.5, -10.5) {};
		\node [style=empty] (40) at (-3.5, -16.5) {};
		\node [style=empty] (41) at (10.25, 2) {$\overset{\textcolor{red}{a}}{\ZZ} (X\otimes\overset{ \textcolor{teal}{b}}{\TT}  Y)$};
		\node [style=empty] (42) at (10.25, 5.5) {};
		\node [style=empty] (43) at (10.25, 2.5) {};
		\node [style=empty] (44) at (5.5, -10) {$ \overset{ \textcolor{teal}{b}}{\TT}( \overset{ \textcolor{red}{a}}{\TT} X\otimes Y)$};
		\node [style=empty] (47) at (5.5, -10.5) {};
		\node [style=empty] (48) at (5.5, -16.5) {};
		\node [style=empty] (49) at (10.25, -5.25) {$\overset{ \textcolor{red}{a}}{\TT}(X\otimes\overset{ \textcolor{teal}{b}}{\TT} Y)$};
		\node [style=empty] (50) at (10.25, 1.5) {};
		\node [style=empty] (51) at (10.25, -4.75) {};
		\node [style=empty] (54) at (4.25, -2.75) {$\overset{ \textcolor{red}{a}}{\TT}  X\otimes\overset{ \textcolor{teal}{b}}{\TT} Y$};
		\node [style=empty] (55) at (2.25, -4.25) {};
		\node [style=empty] (56) at (5.5, -9.5) {};
		\node [style=empty] (57) at (-3.5, -3.75) {$\overset{ \textcolor{red}{a}}{\TT} X\otimes\overset{\textcolor{teal}{b}}{\ZZ}  Y$};
		\node [style=empty] (58) at (-1, 2) {$\overset{\textcolor{red}{a}}{\ZZ}  X\otimes \overset{ \textcolor{teal}{b}}{\TT} Y$};
		\node [style=empty] (60) at (-3.75, -3.25) {};
		\node [style=empty] (62) at (-1.75, 2.25) {};
		\node [style=empty] (63) at (-3.5, -4.25) {};
		\node [style=empty] (64) at (-3.5, -9.5) {};
		\node [style=empty] (65) at (-0.75, 1.5) {};
		\node [style=empty] (66) at (4, -2.25) {};
		\node [style=empty] (67) at (-5.25, 4.25) {$\iota$};
		\node [style=empty] (68) at (-7, 0.25) {$\iota$};
		\node [style=empty] (69) at (-6.25, -18.75) {$\iota$};
		\node [style=empty] (70) at (22, 3.5) {$\iota$};
		\node [style=empty] (71) at (-0.75, -4.25) {$\iota$};
		\node [style=empty] (72) at (2.25, 0) {$\iota$};
		\node [style=empty] (73) at (9.25, 4) {$\iota$};
		\node [style=empty] (74) at (5, 1) {$\tau'$};
		\node [style=empty] (75) at (-5, -6.75) {$\tau$};
		\node [style=empty] (76) at (-6.75, -9) {$\iota$};
		\node [style=empty] (77) at (-5, -13.5) {$\overset{\textcolor{teal}{b}}{\ZZ} \tau'$};
		\node [style=empty] (78) at (1, -10.75) {$\iota$};
		\node [style=empty] (79) at (5, -6.75) {$\tau$};
		\node [style=empty] (80) at (6.5, -4.5) {$\tau'$};
		\node [style=empty] (81) at (7.25, -13.5) {$ \overset{ \textcolor{teal}{b}}{\TT}\tau'$};
		\node [style=empty] (82) at (14.75, -6.25) {$\overset{ \textcolor{red}{a}}{\TT}\tau$};
		\node [style=empty] (83) at (1, -16.5) {$\iota$};
		\node [style=empty] (84) at (20.5, -9.75) {$\mu_{X\otimes Y}$};
		\node [style=empty] (85) at (15.5, -17.5) {$\mu_{X\otimes Y}$};
		\node [style=empty] (86) at (20.25, -1.5) {$\iota$};
		\node [style=empty] (87) at (9.25, -1.5) {$\iota$};
		\node [style=empty] (88) at (15, 1) {$\overset{\textcolor{red}{a}}{\ZZ} \tau$};
		\node [style=empty] (89) at (26.25, -4.25) {$\iota_{X\otimes Y}$};
		\node [style=empty] (90) at (9.5, -20.75) {$\iota_{X\otimes Y}$};
		\node [style=none] (91) at (2.5, 4) {\textcolor{blue}{(1)}};
		\node [style=none] (92) at (16, 4) {\textcolor{blue}{(2)}};
		\node [style=none] (93) at (-6.75, -4.5) {\textcolor{blue}{(3)}};
		\node [style=none] (94) at (-3.25, 0.25) {\textcolor{blue}{(4)}};
		\node [style=none] (95) at (6.75, -0.75) {\textcolor{blue}{(5)}};
		\node [style=none] (96) at (15, -1.75) {\textcolor{blue}{(6)}};
		\node [style=none] (97) at (22.25, -2.5) {\textcolor{blue}{(7)}};
		\node [style=none] (98) at (0, -6.75) {\textcolor{blue}{(8)}};
		\node [style=none] (99) at (14.25, -12.25) {\textcolor{blue}{(9)}};
		\node [style=none] (100) at (-7, -14.25) {\textcolor{blue}{(10)}};
		\node [style=none] (101) at (1, -13.5) {\textcolor{blue}{(11)}};
		\node [style=none] (102) at (4.25, -18.5) {\textcolor{blue}{(12)}};
		\node [style=empty] (103) at (2, -3.75) {$\overset{ \textcolor{red}{a}}{\TT} X\otimes\overset{ \textcolor{teal}{b}}{\TT}  Y$};
		\node [style=empty] (106) at (-1.25, 1.5) {};
		\node [style=empty] (107) at (1.75, -3.25) {};
		\node [style=empty] (108) at (-0.75, -1.5) {$\iota$};
		\node [style=empty] (109) at (24.5, -13.25) {$\overset{ \textcolor{red}{a}\ast \textcolor{teal}{b}}{\TT}(X\otimes Y)$};
		\node [style=empty] (110) at (24.5, -13.75) {};
		\node [style=empty] (111) at (24.5, -19.5) {};
		\node [style=empty] (112) at (24.5, -20) {$\overset{ \textcolor{teal}{b}\ast \textcolor{red}{a}}{\TT}(X\otimes Y)$};
		\node [style=empty] (113) at (24.5, -7.75) {};
		\node [style=empty] (114) at (24.5, -7.25) {$\overset{ \textcolor{red}{a}\ast  \textcolor{teal}{b}}{\TT}(X\otimes Y)$};
		\node [style=empty] (115) at (24.5, -12.75) {};
	\end{pgfonlayer}
	\begin{pgfonlayer}{edgelayer}
		\draw [style=to] (0) to (7);
		\draw [style=to] (7) to (1);
		\draw [style=to] (12) to (13);
		\draw [style=to] (2) to (6);
		\draw [style=to] (10) to (11);
		\draw [style=to] (8) to (9);
		\draw [style=to] (22) to (23);
		\draw [style=to] (24) to (25);
		\draw [style=to] (30) to (31);
		\draw [style=to] (32) to (33);
		\draw [style=to] (39) to (40);
		\draw [style=to] (42) to (43);
		\draw [style=to] (41) to (28);
		\draw [style=to] (47) to (48);
		\draw [style=to] (50) to (51);
		\draw [style=to] (55) to (56);
		\draw [style=to] (54) to (49);
		\draw [style=to] (63) to (64);
		\draw [style=to] (65) to (66);
		\draw [style=to] (58) to (41);
		\draw [style=mono] (6) to (3);
		\draw [style=mono] (14) to (15);
		\draw [style=to] (106) to (107);
		\draw [style=equal-arrow] (110) to (111);
		\draw [style=to] (8) to (60);
		\draw [style=to] (8) to (62);
		\draw [style=to] (4) to (36);
		\draw [style=to] (36) to (44);
		\draw [style=to] (49) to (29);
		\draw [style=to] (57) to (103);
		\draw [style=to] (26) to (111);
		\draw [style=to] (35) to (115);
		\draw [style=equal-arrow] (3) to (112);
		\draw [style=equal-arrow] (113) to (115);
	\end{pgfonlayer}
\end{tikzpicture}}
	  \end{equation}
	  (1) $\tau'^\ZZ$ is natural;
	  (2) definition of $\tau^\ZZ$; (3) $\tau^\ZZ$ is natural; (4) $\CC$ is
	  monoidal; (5) definition of $\tau'^\ZZ$; (6) $\iota$ is natural; (7), (12)
	  definition of $\mu^\ZZ$; (8) definition of $\tau^\ZZ$; (9) $\iota$ is
	  central; (10) definition of $\tau'^\ZZ$; (11) $\iota$ is natural.
\end{proof}

\end{document}